\theoremstyle{plain}
    \newtheorem{thm}{Theorem}[section]
     \newtheorem{defi}[thm]{Definition}
       \newtheorem{problem}[thm]{Problem}
    \newtheorem{lem}[thm]{Lemma}
    \newtheorem{pro}[thm]{Proposition}
    \newtheorem{question}[thm]{Question}
    \newtheorem{remark}[thm]{Remark}
\newtheorem{setup}[thm]{}
\newcommand{\Rmnum}[1]{\expandafter\@slowromancap\romannumeral #1@}
\begin{document}

\bibliographystyle{plain}
\title[Threefolds with the action of an abelian group of maximal rank]
{Compact K\"ahler threefolds with the action of an abelian group of maximal rank
}

\author{Guolei Zhong}
\address
{
\textsc{Department of Mathematics} \endgraf
\textsc{National University of Singapore,
Singapore 119076, Republic of Singapore
}}
\email{zhongguolei@u.nus.edu}

\begin{abstract}
In this note, we study the normal compact K\"ahler (possibly singular) threefold  $X$  admitting the action of a free abelian group $G$ of maximal rank, all the non-trivial elements of which are of positive entropy. 
If such $X$ is further assumed to have only terminal singularities, then we prove that  it is either a rationally connected projective threefold or bimeromorphic to a quasi-\'etale quotient of a complex $3$-torus. 
\end{abstract}
\subjclass[2010]{
08A35,  
14J50, 
11G10,  
}

\keywords{ compact K\"ahler threefolds, automorphisms,  complex dynamics, tori}

\maketitle


\section{Introduction}
We work over the field $\mathbb{C}$ of complex numbers. 
Let $X$ be a compact K\"ahler manifold of dimension $n$. 
For an automorphism $g\in \textup{Aut}(X)$, the \textit{topological entropy} of $g$, defined in the theory of dynamical systems, turns out to coincide with the logarithm of the spectral radius of the pull-back operator $g^*$ acting on $\oplus_{0\le p\le n}H^{p,p}(X,\mathbb{R})$ (cf.~\cite{gromov2003on} and \cite{yomdin1987volume}).
We say that  $g\in\textup{Aut}(X)$ is of \textit{positive entropy}, if the topological entropy is strictly positive, or equivalently, the spectral radius of $g^*$ acting on $H^{p,p}(X,\mathbb{R})$ is strictly larger than $1$ for all or for some $p$ with $1\le p\le n-1$.
We say that $g\in\textup{Aut}(X)$ is of \textit{null entropy} if $g$ is not of positive entropy.
See the survey \cite{dinh2012tits} and the references therein.

T.-C. Dinh and N. Sibony proved in  \cite{dinh2004groupes} that every commutative subgroup $G\subseteq\textup{Aut}(X)$  has the (dynamical) rank $\le n-1$, if all the non-trivial elements of $G$ are of positive entropy. 
Subsequently, D.-Q. Zhang proved a theorem of Tits type for compact K\"ahler manifolds (cf.~\cite[Theorem 1.1]{zhang2009theorem}), extending the classical Tits alternative, and also extending \cite{dinh2004groupes} to the solvable case. 

Problem \ref{prob_main} below was first stated in \cite{dinh2004groupes} (cf.~\cite[Problem 1.5]{dinh2012tits}). 
The interest of studying these $X$ when $G$ has maximal  rank  is also the initial point of this paper.

\begin{problem}\label{prob_main}
\textup{Classify compact K\"ahler manifolds $X$ of dimension $n\ge 3$ admitting a free abelian group $G$ of automorphisms of rank $n-1$ which  is of positive entropy.}
\end{problem}

In the singular setting, we consider a normal projective variety (resp.\,a compact K\"ahler space with at worst rational singularities) $X$ and define the \textit{first dynamical degree} $d_1(f)$ of an automorphism $f\in\textup{Aut}(X)$ as the spectral radius $\rho(f^*)$ of its natural pull-back $f^*$ on the N\'eron-Severi group $\textup{N}^1(X)$ (resp. the Bott-Chern cohomology space $H_{\textup{BC}}^{1,1}(X)$).
We say that $f$ is of \textit{positive entropy} if $d_1(f)>1$, otherwise it is of \textit{null entropy}. 
Note that our definition here for $d_1(f)$ in the singular setting coincides with the usual one when $X$ is smooth:
\begin{remark}\label{cm_rmk_bimero_invariant}
\textup{(1). For an $n$-dimensional normal projective variety $X$ with an automorphism $f\in\textup{Aut}(X)$, we take the $f$-equivariant resolution $\pi:\widetilde{X}\to X$ (over a field of characteristic zero; cf.\,e.g.\,\cite[Theorem 2.0.1]{wlodarczk2009resolution}) and denote by $\widetilde{f}$ the lifting of $f$ to $\widetilde{X}$. 
Taking an ample divisor $H$ on $X$, we  have
\begin{align*}
d_1(f)=\rho(f^*|_{\textup{N}^1(X)})&=\lim_{m\to\infty}((f^m)^*H\cdot H^{n-1})^{1/m}\\
&=\lim_{m\to\infty}(\pi^*(f^m)^*H\cdot (\pi^*H)^{n-1})^{1/m}\\
&=\lim_{m\to\infty}((\widetilde{f}^m)^*\pi^*H\cdot (\pi^*H)^{n-1})^{1/m}=\rho(\widetilde{f}^*|_{\textup{N}^1(\widetilde{X})})=d_1(\widetilde{f}).
\end{align*}
Here, the second and the fifth equalities are due to \cite[Proposition A.2]{nakayama2009building}, noting that $\pi^*H$ is a nef and big divisor on $\widetilde{X}$ and lies in the interior of the pseudo-effective cone which spans $\textup{N}^1(\widetilde{X})$.}

\textup{(2). For a compact K\"ahler space $X$ with at worst rational singularities, if $f\in\textup{Aut}(X)$ is an automorphism, then similarly, we take the $f$-equivariant resolution $\widetilde{X}\to X$ and lift $f$ to $\widetilde{f}$. 
Replacing $\textup{N}^1(X)$ in (1) by the Bott-Chern cohomology $\textup{H}^{1,1}_{\textup{BC}}(X)$ (cf.~\cite[Definition 4.6.2]{boucksom2013an}) and replacing the ample divisor $H$ in (1) by a K\"ahler class $[\omega]$ on $X$, we see that $d_1(f)$ coincides with $d_1(\widetilde{f})$ by applying \cite[Remark 2.3,  Propositions  2.6 and 2.8]{zhong2019intamplified}.}

\textup{(3). In the published version of this paper, we removed this Remark \ref{cm_rmk_bimero_invariant} therein for the organization.}
\end{remark}

In the past decade, D.-Q. Zhang established the $G$-equivariant minimal model program  for projective varieties, and Problem \ref{prob_main}  has thus been intensively studied  
 in his series papers (cf.~\cite{zhang2009theorem}, \cite{zhang2013algebraic} and \cite{zhang2016ndimension}). 
Inspired by \cite[Theorem 1.1]{zhang2016ndimension}, we are interested in
Problem \ref{prob_main} itself (without assuming the projectivity) and  ask the following question.

\begin{question}\label{ques_1.4}
Let $X$ be a normal compact K\"ahler space of dimension $n$ with mild singularities.
Suppose that $\textup{Aut}(X)\supseteq G:=\mathbb{Z}^{\oplus n-1}$ and every non-trivial  element of $G$ is of positive entropy.
Is $X$ either  rationally connected, or $G$-equivariantly bimeromorphic to a $Q$-torus?
\end{question}

Recall that a normal compact K\"ahler space $X$ is said to be a \textit{Q-torus}, if there exists a complex torus (full rank) $T$ and a finite surjective morphism $\pi:T\to X$ such that $\pi$ is \'etale in codimension $1$ (cf.~\cite[Definition 2.13]{nakayama2010polarized}).

Question \ref{ques_1.4} is related to the  building blocks of the commutative subgroup $G$  on   K\"ahler spaces. 
For example, we don't know whether the automorphisms of Calabi-Yau manifolds would appear as a fundamental building block of $G$ in the general non-algebraic situation.
Question \ref{ques_1.4} also seems to have its own interest from the viewpoints of differential geometry, since the minimal model program (MMP for short) for  K\"ahler spaces is less known in higher dimension (cf.~\cite{Horing2015mori}, \cite{horing2016minimal} and \cite{das2020the} for the MMP for K\"ahler threefolds).

In this paper, we shall study Problem \ref{prob_main} and  answer Question \ref{ques_1.4} affirmatively for K\"ahler threefolds, with
the main result Theorem \ref{thm1.1} (cf.~Remark \ref{rmk_solvable} for the solvable case). 
Let us consider the following hypothesis:

\setlength\parskip{4pt}\par \vskip 0.3pc \noindent
\textbf{Hyp\,(A): $X$ is a normal $\mathbb{Q}$-factorial compact K\"ahler threefold with at worst terminal singularities; $\textup{Aut}(X)\supseteq G:=\mathbb{Z}^2$ and every non-trivial element of $G$ is of positive entropy.}

 \setlength\parskip{0pt}
 \par \vskip 0pc \noindent

\begin{thm}\label{thm1.1}
Suppose  $(X,G)$ satisfies \textbf{Hyp\,(A)}. Then the following assertions hold:
\begin{enumerate}
\item[\textup{(1)}] If $K_X$ is not pseudo-effective, then $X$ is rationally connected. In particular, $X$ is a projective threefold.
\item[\textup{(2)}] If $K_X$ is pseudo-effective, then the Kodaira dimension $\kappa(X)=0$ and $X$ is bimeromorphic to $X_{\textup{min}}:=T/H$ for a finite group $H$ acting freely outside a finite set of a complex $3$-torus $T$. Moreover, $G$ descends to an automorphism subgroup $G|_{X_{\textup{min}}}\subseteq\textup{Aut}(X_{\textup{min}})$ and  further lifts to $G_T\subseteq \textup{Aut}(T)$ such that $G|_{X_{\textup{min}}}\cong G_T/H$.
\end{enumerate}
\end{thm}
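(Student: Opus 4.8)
The plan is to run the $G$-equivariant minimal model program for compact K\"ahler threefolds \cite{Horing2015mori,horing2016minimal,das2020the} and then dispose of the two cases by analyzing the canonical fibrations that arise, using throughout the Dinh--Sibony bound \cite{dinh2004groupes} (a commutative positive-entropy group of automorphisms of a compact K\"ahler space of dimension $m$ has rank $\le m-1$) together with the product formula for dynamical degrees. A preliminary reduction: one may replace $G$ by any finite-index subgroup, which is again $\cong\mathbb{Z}^2$ with all non-trivial elements of positive entropy; after such a replacement each extremal contraction or flip occurring in the MMP, together with the Mori fibre space it terminates in, may be taken $G$-equivariant, $\mathbb{Q}$-factoriality, terminality and the K\"ahler property are preserved, and every $G$-equivariant divisorial contraction or flip leaves $d_1(g)$ unchanged for all $g\in G$ (since $g^*$ is block-triangular for the splitting of $\textup{N}^1$, resp.\ $H^{1,1}_{\textup{BC}}$, into the pull-back from the target and the span of the finitely many, hence $G$-permuted, exceptional divisors, on which $g^*$ has finite order); so positive entropy of $G$ survives the whole MMP. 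The Iitaka fibration, the Albanese map and the Beauville--Bogomolov decomposition are $G$-equivariant by functoriality.

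\textbf{Case $K_X$ not pseudo-effective.} The $G$-equivariant MMP ends with a Mori fibre space $\psi\colon X'\to Y$, $0\le\dim Y\le 2$, on which $G$ acts. If $\dim Y=0$ then $X'$ is a terminal $\mathbb{Q}$-Fano threefold, hence rationally connected and projective, so $X$ is rationally connected (a bimeromorphic invariant for our singularities) and, being Moishezon and K\"ahler, projective. If $\dim Y=2$, the general fibre is $\mathbb{P}^1$ and the product formula gives $d_1(g|_{X'})=\max(1,d_1(\bar g|_Y))$, so the fibrewise (hence null-entropy) kernel of $G\to\textup{Aut}(Y)$ is trivial and $G\hookrightarrow\textup{Aut}(Y)$ is a rank-$2$ positive-entropy group on a surface, contradicting the Dinh--Sibony bound. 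If $\dim Y=1$, the general fibre is a del Pezzo surface, whose automorphism group acts through a finite group on $\textup{N}^1$ (it fixes $-K$, an interior point of a cone on which the intersection form has signature $(1,\ast)$); hence every $g$ in the (rank $\ge 1$) kernel of $G\to\textup{Aut}(Y)$ has $d_1(g|_{X'})=1$, impossible, while if that kernel is finite a finite-index subgroup of $G$ embeds into $\textup{Aut}$ of the curve $Y$, which carries no positive entropy, again impossible. Thus $\dim Y=0$, proving (1).

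\textbf{Case $K_X$ pseudo-effective.} Running the MMP and invoking abundance for K\"ahler threefolds, $X$ is bimeromorphic to a minimal model $X_{\textup{min}}$ with $K_{X_{\textup{min}}}$ semi-ample; let $\varphi\colon X_{\textup{min}}\to Z$ be the Iitaka fibration, $\dim Z=\kappa(X)$. If $\kappa=3$ then $X_{\textup{min}}$ is of general type with finite automorphism group, impossible; if $\kappa=2$ the fibres are elliptic curves and $G$ embeds into $\textup{Aut}$ of the surface $Z$, impossible. Suppose $\kappa=1$, $\varphi\colon X_{\textup{min}}\to C$ a curve: the kernel $G_0$ of $G\to\textup{Aut}(C)$ has rank exactly $1$ (nontrivial, else $G$ embeds in $\textup{Aut}(C)$; rank $\le 1$, else a rank-$2$ fibrewise action gives positive-entropy $\mathbb{Z}^2$ on the surface fibre), so $C$ is rational or elliptic and the general fibre $F$ is a K3, Enriques or abelian surface admitting a positive-entropy automorphism; one then derives a contradiction, either from an isotrivial presentation $X_{\textup{min}}\dashrightarrow(\widetilde C\times F)/\Gamma$ (if $g(\widetilde C)\le 1$ this forces $\kappa\le 0$; if $g(\widetilde C)\ge 2$ then $|\textup{Aut}(\widetilde C)|<\infty$ forces a finite-index subgroup of $G$ into $\textup{Aut}(F)$), or, in the non-isotrivial case, by restricting $G$ to a smooth fibre $F_c$ over a fixed point of the infinite-order base automorphism, where the kernel of $G\to\textup{Aut}(F_c)$ is trivial because an automorphism fixing the divisor $F_c$ pointwise has null entropy, and $d_1(g|_{F_c})\ge d_1(g|_{X_{\textup{min}}})>1$ for every nontrivial $g$, once more yielding positive-entropy $\mathbb{Z}^2$ on a surface. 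Hence $\kappa=0$ and $K_{X_{\textup{min}}}\equiv 0$. By the Beauville--Bogomolov-type decomposition for terminal threefolds with numerically trivial canonical class, a finite quasi-\'etale cover $X_1\to X_{\textup{min}}$ is a $3$-torus $T$, or $E\times S$ with $S$ a K3, or a Calabi--Yau threefold, and $G$ lifts to $X_1$ after passing to finite index. The second case gives $G\hookrightarrow\textup{Aut}(S)$ (the elliptic factor carrying no entropy), again positive-entropy $\mathbb{Z}^2$ on a surface; the Calabi--Yau case is excluded by the same analysis of $G$-equivariant fibrations (a fibration onto a surface reduces to positive-entropy $\mathbb{Z}^2$ on that surface, using $H^{1,1}=\textup{N}^1$ and non-uniruledness; a fibration onto a curve is treated as in the $\kappa=1$ case), together with the fact that $c_2(X_1)$ is $\textup{Aut}$-invariant and lies in the interior of the salient $G$-invariant cone of pseudo-effective $1$-cycles — an automorphism fixing such an interior point has $d_2(g)=1$, whence $d_1(g)=d_2(g)=1$ by Serre duality on the Calabi--Yau threefold, contradicting positive entropy. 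Therefore $X_1=T$, so $X_{\textup{min}}=T/H$ with $H$ finite and quasi-\'etale; terminality of $T/H$ forces $H$ to act freely outside a finite subset of $T$. Finally $G$ descends (from the MMP) to $G|_{X_{\textup{min}}}\subseteq\textup{Aut}(X_{\textup{min}})$ and lifts, along the Galois cover $T\to X_{\textup{min}}$, to $G_T\subseteq\textup{Aut}(T)$ with $G|_{X_{\textup{min}}}\cong G_T/H$.

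The two places where I expect the real difficulty to lie are: (i) ruling out $\kappa(X)=1$ in the non-isotrivial situation — controlling the relative dynamical degree of the base automorphism, and coping with the possibility that all its fixed fibres are singular, or (for $C$ elliptic) that it has no fixed fibre at all; and (ii) excluding the Calabi--Yau building block, i.e.\ proving that a (possibly singular) Calabi--Yau threefold admits no free abelian group of positive-entropy automorphisms of maximal rank. Both require genuinely precise spectral input ($d_1=d_2$, invariance and positivity of $c_2$, estimates on relative dynamical degrees) rather than soft dimension counts, and are exactly where complex dynamics and the threefold classification have to be combined carefully.
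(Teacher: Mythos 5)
Your argument stands or falls on the preliminary assertion that the MMP for Kähler threefolds "may be taken $G$-equivariant" after shrinking $G$ to a finite-index subgroup, and this is precisely the step that is not available: in the projective case Zhang's $G$-MMP rests on the finiteness of $(K_X+\xi)$-negative extremal rays for the common nef and big eigenclass $\xi$, together with Kodaira's lemma and base-point-freeness for such (generally irrational) classes, and none of this is known for Kähler threefolds — this is exactly the obstruction recorded in Remark \ref{reamrk1.3}, which is why the paper does \emph{not} run a $G$-MMP. Without it, your Case (1) trichotomy on the base $Y$ of a $G$-equivariant Mori fibre space has no starting point, and your closing claim in Case (2) that "$G$ descends (from the MMP) to $G|_{X_{\textup{min}}}\subseteq\textup{Aut}(X_{\textup{min}})$" is unsupported: a priori $G$ only descends to $\textup{Bim}(X_{\textup{min}})$, and proving these maps are biregular is a genuine step (the paper does it in Step 5, by lifting to the Albanese closure, which is shown to be a torus, and descending again). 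The paper circumvents the $G$-MMP entirely: part (1) is proved via the canonically defined (hence $G$-equivariant) MRC fibration together with Lemma \ref{lem-zhang-2-10}, and part (2) uses only the non-equivariant existence of a minimal model, then the augmented irregularity, the Albanese closure, and the splitting Lemma \ref{splitting lemma} on the cover $F\times T$; your elaborate $\kappa=1$ analysis (isotrivial vs.\ non-isotrivial, fixed fibres of the base automorphism), whose difficulties you yourself flag, is likewise unnecessary once Lemma \ref{lem-zhang-2-10} is available, since any $G$-equivariant fibration with positive-dimensional lower-dimensional base is excluded outright (Proposition \ref{prop_kappa<=0}).

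A second concrete gap is your exclusion of the Calabi--Yau building block. The claim that $c_2(X_1)$ lies in the \emph{interior} of the cone of pseudo-effective $1$-cycles is false in general: for instance, a Calabi--Yau threefold fibred in abelian surfaces has a nonzero nef divisor class $D$ (the fibre class) with $D\cdot c_2=0$, so $c_2$ sits on the boundary and the deduction "$d_2(g)=1$, hence $d_1(g)=1$" never gets started. The correct (and much shorter) route is the one the paper takes: a weak Calabi--Yau threefold has $q^{\circ}=0$ and is therefore projective (Proposition \ref{keyproposition}(3), via \cite{graf2018algebraic}), after which the projective theorem \cite[Theorem 1.1]{zhang2016ndimension} rules out a rank-two positive-entropy abelian action on it. With these two repairs — replacing the assumed $G$-equivariant Kähler MMP by the MRC-fibration/minimal-model-plus-Albanese-closure arguments, and replacing the $c_2$-interiority argument by projectivity of the weak Calabi--Yau case — your outline essentially reproduces the paper's proof; as written, however, both gaps are fatal.
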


To prove Theorem \ref{thm1.1}, the main obstacle is to deal with the case  $\kappa(X)=0$ (cf.~Proposition \ref{prop_kappa<=0}).
Now, we briefly describe our strategy.

\begin{setup}[\textbf{Strategy for the proof of Theorem \ref{thm1.1} when $\kappa(X)=0$}]
\textup{First,  we reduce $X$ to its minimal model $X_{\textup{min}}$ equipped with the induced action  $G|_{X_{\textup{min}}}$ (of the same rank) by  bimeromorphic transformations. 
    Note that  
    elements in  $G|_{X_{\textup{min}}}$ may not be biholomorphic.} 

\textup{Second, we 
study the Albanese closure $X'_{\textup{min}}$ of $X_{\textup{min}}$ (cf.~Definition \ref{defi-alb-closure}) and classify $X'_{\textup{min}}$ (cf.~Proposition \ref{keyproposition}). 
Meanwhile, we lift $G|_{X_{\textup{min}}}$ to its Albanese closure $X'_{\textup{min}}$  and get a subgroup $G|_{X'_{\textup{min}}}\subseteq\textup{Bim}(X'_{\textup{min}})$  (of the same rank)  on  $X'_{\textup{min}}$. }

\textup{Finally, we lift $G|_{X'_{\textup{min}}}$ to its splitting cover $E\times F$, where $E$ is a torus and $F$ is a weak Calabi-Yau space (cf.~Definition \ref{defi-wcy}). 
We  deduce  that the group to which $G$ lifts    actually consists of biholomorphic transformations (in $\textup{Aut}(E\times F)$). 
Then, we show that there exists a finite index subgroup $G_1\subseteq G|_{E\times F}$ such that  $(E\times F, G_1)$ satisfies \textbf{Hyp\,(A)}. 
 So we conclude our theorem by applying the inspiring result \cite[Lemma 2.10]{zhang2009theorem}.} 
\end{setup}

We are concerned with the following special case of Theorem \ref{thm1.1}  when  $X$ is minimal.

\begin{pro}\label{thm1.2}
Suppose that $(X,G)$ satisfies \textbf{Hyp\,(A)}. 
Suppose further that $X$ is minimal, i.e., the canonical divisor $K_X$ is nef.
Then, $X$ is a quasi-\'etale quotient of a complex torus $T$. Further, the group $G$ lifts to $G_T\subseteq\textup{Aut}(T)$ on $T$. 
\end{pro}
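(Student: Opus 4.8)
The plan is to run the $G$-equivariant minimal model program machinery in reverse: since $X$ is already minimal (with $K_X$ nef) and we will show $\kappa(X)=0$, the abundance-type results will force $K_X\equiv 0$, and then we invoke the structure theory of compact K\"ahler threefolds with numerically trivial canonical class. First I would show that $K_X$ cannot be pseudo-effective \emph{and} satisfy $\kappa(X)>0$: if $\kappa(X)\ge 1$ the Iitaka fibration $X\dashrightarrow Y$ with $0<\dim Y<3$ is $G$-equivariant (it is canonically defined), so $G$ acts on the base $Y$ and on the general fiber; an argument on the dynamical degrees (the pullback of $G$ on the base, and on a fiber, must be trivial for positive-entropy reasons as in \cite{dinh2004groupes} and \cite{zhang2016ndimension}) shows the rank of $G$ would drop below $2$, a contradiction. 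Hence either $K_X$ is not pseudo-effective — impossible here since $K_X$ is nef — or $\kappa(X)\le 0$; combined with nefness and the nonvanishing/abundance theorem for K\"ahler threefolds (\cite{das2020the}, building on \cite{Horing2015mori}, \cite{horing2016minimal}) we get $\kappa(X)=0$ and in fact $K_X\equiv 0$, i.e. $c_1(X)=0$ in $H^{1,1}_{\mathrm{BC}}(X)$.

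Next I would apply the Beauville--Bogomolov-type decomposition for terminal (or klt) compact K\"ahler threefolds with $c_1=0$: after a quasi-\'etale cover $\widehat{X}\to X$, $\widehat{X}$ decomposes (up to a further quasi-\'etale cover) as a product $A\times Z$ where $A$ is an abelian variety/complex torus and $Z$ is a product of irreducible Calabi--Yau and irreducible holomorphic-symplectic pieces; in dimension $3$ there are no irreducible holomorphic-symplectic factors, so $Z$ is either trivial, a Calabi--Yau surface (K3-type, but those are simply connected and do not admit positive-entropy free $\mathbb{Z}^2$-actions of the right rank — a rank count again), or a Calabi--Yau threefold. The goal is to rule out every factor except the torus. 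The key input is that $G$, being free abelian of rank $2$ with all nontrivial elements of positive entropy, cannot act with that rank on a Calabi--Yau threefold: by \cite{dinh2004groupes} the rank is $\le n-1=2$, but the positive-entropy action on a Calabi--Yau threefold forces (via the invariant nef classes / the action on $H^{2,0}$ and $H^{1,1}$, and the fact that $h^{1,0}=0$) a drop in rank — here I would cite or adapt \cite[Lemma 2.10]{zhang2009theorem} and the Albanese-type considerations from the strategy paragraph. Likewise the Calabi--Yau surface factor is excluded. Therefore the quasi-\'etale cover $\widehat{X}$ is a complex torus $T$, and $X=T/H$ is a quasi-\'etale quotient.

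Finally I would lift the group. Since $\widehat{X}\to X$ is the (canonically defined) quasi-\'etale cover trivializing $c_1$ — concretely, it can be taken as an Albanese-type or index-one cover, hence functorial — the action of $G\subseteq\mathrm{Aut}(X)$ lifts to an action of a group $G_T$ on $T$ normalizing the deck group $H$, with $G=G_T/H$ after passing to a finite-index subgroup if necessary; that each element of $G_T$ is genuinely biholomorphic (not merely bimeromorphic) is automatic because $T$ is smooth and an automorphism of $X$ lifts to a finite \'etale-in-codimension-one self-map of the torus, which extends to a biholomorphism by the normality of $T$ and Hartogs. The main obstacle I expect is the exclusion of the Calabi--Yau threefold factor: one must genuinely use the hypothesis that \emph{every} nontrivial element of $G$ has positive entropy together with the rank-$2$ maximality, and translate this into a statement about the action on the cohomology of a Calabi--Yau threefold — this is exactly where $h^{2,0}\ne 0$ (so there is a canonical invariant $(2,0)$-class) and the structure of the nef cone conspire to forbid a positive-entropy $\mathbb{Z}^2$; I would isolate this as a separate lemma mirroring \cite[Lemma 2.10]{zhang2009theorem}.
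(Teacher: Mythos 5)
Your first half is essentially the paper's own reduction: $\kappa(X)\le 0$ via the $G$-equivariant Iitaka fibration and Lemma \ref{lem-zhang-2-10} (this is Proposition \ref{prop_kappa<=0}), then $\kappa(X)=0$ and $K_X\sim_{\mathbb{Q}}0$ by abundance, so everything hinges on the structure of a minimal terminal K\"ahler threefold with torsion canonical class. The genuine gap is in how you dispose of the Calabi--Yau threefold alternative. You assert that a free abelian rank-$2$ positive-entropy action on a Calabi--Yau threefold is impossible ``via the invariant nef classes, the action on $H^{2,0}$ and $H^{1,1}$, and $h^{1,0}=0$,'' by adapting \cite[Lemma 2.10]{zhang2009theorem}. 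This does not work as stated: the Dinh--Sibony bound permits rank $n-1=2$ on \emph{any} threefold, so no rank drop is automatic; \cite[Lemma 2.10]{zhang2009theorem} requires a $G$-equivariant fibration onto a base of intermediate positive dimension, and a weak Calabi--Yau threefold has $q^\circ=0$, hence no Albanese (or other evident) fibration to feed into it; and an invariant canonical form is no obstruction to positive entropy (K3 surfaces have $h^{2,0}=1$ and plenty of positive-entropy automorphisms; a strict Calabi--Yau threefold in fact has $h^{2,0}=0$, contrary to what you write). Excluding this case is precisely the hard point. The paper does it by showing $q^\circ=0$ forces projectivity (Proposition \ref{keyproposition}(3), Graf's projectivity of weak Calabi--Yau K\"ahler threefolds) and then quoting the deep projective theorem \cite[Theorem 1.1]{zhang2016ndimension}, whose proof rests on the $G$-equivariant MMP, the nef eigenvector $\xi$ with $\xi\cdot c_2=0$, and the characterization of torus quotients; the K\"ahler-analytic analogue of that characterization (cf.\ Proposition \ref{pro2.8}) would require $\xi$ to be K\"ahler rather than merely nef and big, which is exactly why no short cohomological count is available. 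Until you supply an actual argument for this case, the proof is incomplete.

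Two secondary remarks. Your exclusion of the $q^\circ=1$ case (the K3$\times$elliptic-curve type factor) by projecting the split action and counting ranks is fine in spirit and is equivalent to the paper's route: lift $G$ to the split cover, pass to a finite-index subgroup satisfying \textbf{Hyp\,(A)} via \cite[Lemma 2.4]{zhang2013algebraic}, and contradict Lemma \ref{lem-zhang-2-10}; but the lifting steps (index-one cover, Albanese closure of Proposition \ref{pro1}, isogeny base change as in \cite[Lemma 4.9]{nakayama2009building}) need to be carried out, not just asserted. Also, you invoke a Beauville--Bogomolov-type decomposition for singular compact K\"ahler threefolds with $c_1=0$; this is not among the paper's tools and would need a precise reference, whereas in dimension $3$ the augmented-irregularity analysis of Proposition \ref{keyproposition}, built on Graf's Theorem \ref{thm-graf-thm1.10}, already provides the needed trichotomy $q^\circ\in\{0,1,3\}$.
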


\begin{remark}[Differences with earlier papers]\label{reamrk1.3}
\textup{Initially, we tried to run the $G$-equivariant minimal model program ($G$-MMP for short) for $X$ as in \cite{zhang2016ndimension}, but we met  problems about the finiteness of $(K_X+\xi)$-negative extremal rays. Here, $\xi=\sum\xi_i$ is the sum of  common nef eigenvectors of $G$ (cf.~\cite[Theorems 4.3 and 4.7]{dinh2004groupes}). 
Besides,  for a (not necessarily rational) nef and big class $\eta$ on a compact K\"ahler manifold, little was known about  Kodaira's Lemma (cf. e.g. \cite[Lemma 2.60]{kollar2008birational}) for $\eta$ and the classical  base-point-free  theorem (cf.~\cite[Conjecture 1.6 and Theorem 1.7]{das2020the}).  
Due to these difficulties mentioned,  we will not (and cannot) run the $G$-MMP for $X$. 
}

\textup{Thanks to the particularity of terminal threefolds, we can analyze the quasi-\'etale covers of its minimal model $X_{\textup{min}}$ (cf.~Proposition \ref{pro1}) to overcome the difficulty. 
We lift every bimeromorphic transformation of $X_{\textup{min}}$ to its quasi-\'etale cover and use the property of minimal surfaces to show that every  lifted bimeromorphic transformation 
is indeed an automorphism (cf.~Lemma \ref{splitting lemma}).
Applying the same strategy for the proof of Theorem \ref{thm1.1}, we can finally extend our main result to the solvable case  (cf.~Remark \ref{rmk_solvable}).}
\end{remark}

As an application, we refer to \cite[Example 4.5]{dinh2004groupes}  for concrete examples with respect to Theorem \ref{thm1.1}.
We  note that there exists a $3$-dimensional $Q$-torus $X$ being rational, which also admits  $\mathbb{Z}^2\cong G\subseteq \textup{Aut}(X)$ of positive entropy, though in this case, such $X$ has worse singularities (cf.~\cite[Example 1.6]{zhang2016ndimension} and \cite[Theorem 5.9]{oguiso2014some}). 

In comparison, we pose the following question at the end of this section. 
\begin{question}
Does there exist a pair $(X,G)$ satisfying \textbf{Hyp\,(A)} such that $X$ is not a $Q$-torus even after bimeromorphic change of models? 
\end{question}

\subsubsection*{\textbf{\textup{Acknowledgments}}}
The author would like to thank Professor De-Qi Zhang for many inspiring  discussions and encouragements for this project. 
He thanks Professor Tien-Cuong Dinh for pointing out Remark \ref{cm_rmk_bimero_invariant}, and the referees for very careful reading and many  suggestions to improve the paper.
The author is supported by a President's Graduate Scholarship of NUS.

\section{Preliminaries and the proof of Proposition \ref{thm1.2}}
Let $X$ be a normal compact K\"ahler space. 
We refer to \cite[Chapter 2]{kollar2008birational} for different kinds of singularities.   
Let $\textup{H}^{1,1}_{\textup{BC}}(X)$ be the \textit{Bott-Chern cohomology} (cf.~\cite[Definition 4.6.2]{boucksom2013an}) and $\textup{N}_1(X)$ the  space of real closed currents of bidimension $(1,1)$ modulo the equivalence relation: $T_1\equiv T_2$ if and only if $T_1(\eta)=T_2(\eta)$ for all real closed $(1,1)$-forms $\eta$ with local potentials (cf.~\cite[Section 2]{horing2016minimal}).

Let $f:X\rightarrow Y$ be a surjective morphism (i.e., a holomorphic map) between normal compact complex spaces. The morphism $f$ is said to be \textit{finite} (resp.~\textit{generically finite}) if $f$ is proper and has discrete fibres (resp.~proper and finite outside a nowhere dense analytic closed subspace of $Y$). 
We say that $f$ is quasi-\'etale if $f$ is finite and \'etale in codimension one.
Denote by $\textup{Bim}(X)$ (resp.~\textup{Aut}(X)) the group of bimeromorphic  (resp.~biholomorphic) self-maps of $X$. Also, $\textup{Bim}(X)$ (resp.~$\textup{Aut}(X)$) is called the \textit{bimeromorphic transformation} (resp.~\textit{analytic automorphism}) \textit{group} of $X$. 

 
We shall apply the following theorem (cf.~\cite[Theorem 1]{kawamata2008flops}) in Section \ref{section3} and recall it here for the convenience of readers. 
Note that the original proof (in the projective setting) still works in our present case for compact K\"ahler (terminal) threefolds.
\begin{thm}[\cite{kawamata2008flops}]\label{lemisomorphic}
Let $\tau: X\dashrightarrow X'$ be a bimeromorphic map between compact K\"ahler threefolds with at worst terminal singularities. 
Suppose  $X$ and $X'$ are both minimal, i.e., the canonical divisors $K_X$ and $K_{X'}$ are nef. 
Then $\tau$ is isomorphic in codimension one.
\end{thm}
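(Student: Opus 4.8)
The plan is to prove the sharper statement that every geometric divisor has the same discrepancy over $X$ and over $X'$, from which the absence of contracted divisors is immediate. First I would resolve $\tau$ by a common resolution: a smooth compact K\"ahler threefold $W$ with proper bimeromorphic morphisms $p\colon W\to X$ and $q\colon W\to X'$, which exists by resolution of singularities for complex spaces (and a resolution of a K\"ahler space may be taken K\"ahler). Since $X$ and $X'$ are $\mathbb{Q}$-Gorenstein with at worst terminal singularities, I can write the two discrepancy formulas
\[
K_W = p^*K_X + \sum_P a(P,X)\,P,\qquad K_W = q^*K_{X'} + \sum_P a(P,X')\,P,
\]
where $P$ ranges over the prime divisors on $W$; terminality gives $a(P,X)>0$ precisely when $P$ is $p$-exceptional (and $a(P,X)=0$ otherwise), and symmetrically for $X'$.

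Subtracting, set $\Theta := p^*K_X - q^*K_{X'} = \sum_P\bigl(a(P,X')-a(P,X)\bigr)P$. The heart of the argument is to show $\Theta=0$ by a twofold application of the negativity lemma. For any curve $C$ contracted by $q$ one has $q^*K_{X'}\cdot C=0$ and $p^*K_X\cdot C = K_X\cdot p_*C\ge 0$, since $K_X$ is nef; hence $\Theta$ is $q$-nef, and symmetrically $-\Theta$ is $p$-nef. Next I would compute pushforwards. Using $q_*K_W = K_{X'}$ together with the first formula gives $q_*\Theta = -\sum_P a(P,X)\,q_*P$, summed over the prime divisors that are $p$-exceptional but not $q$-exceptional; as each such $a(P,X)>0$, this is anti-effective, so $q_*\Theta\le 0$. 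The negativity lemma applied to $q$, with $\Theta$ being $q$-nef, then forces $\Theta\le 0$. The mirror computation over $p$ yields $p_*\Theta = \sum_P a(P,X')\,p_*P\ge 0$ (summed over the $q$-exceptional, non-$p$-exceptional divisors), and since $-\Theta$ is $p$-nef the negativity lemma gives $\Theta\ge 0$. Hence $\Theta=0$, so $a(P,X)=a(P,X')$ for every prime divisor $P$ on $W$.

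Finally I would read off the conclusion. Equality of discrepancies together with terminality of both models shows that $P$ is $p$-exceptional if and only if $a(P,X)>0$ if and only if $a(P,X')>0$ if and only if $P$ is $q$-exceptional; thus the $p$-exceptional and $q$-exceptional prime divisors on $W$ coincide. If $\tau$ contracted a prime divisor $D\subset X$, then its strict transform $\widetilde{D}$ on $W$ would be $q$-exceptional but not $p$-exceptional, contradicting $a(\widetilde{D},X)=a(\widetilde{D},X')$; the same argument applied to $\tau^{-1}$ shows that no divisor is contracted in either direction, which is exactly the assertion that $\tau$ is an isomorphism in codimension one.

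The step I expect to be the main obstacle is transplanting the negativity lemma and the attendant intersection theory to the singular compact K\"ahler setting, where one works with Bott--Chern classes and currents rather than the N\'eron--Severi group. The reassurance is that both the discrepancy computation and the negativity lemma are essentially local over the bases $X$ and $X'$: they involve only $K_X$ and $K_{X'}$ paired with curves lying in the contracted fibres, where the negative-definiteness of the fibre intersection form holds analytically. Consequently the projective proof carries over verbatim once these local tools are available, as furnished by the minimal model program for K\"ahler threefolds, which is precisely what the remark following the statement asserts.
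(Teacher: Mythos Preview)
Your proposal is correct and follows the standard discrepancy--negativity lemma argument. Note, however, that the paper does not supply its own proof of this statement: it simply recalls the result from \cite{kawamata2008flops} and remarks that ``the original proof (in the projective setting) still works in our present case for compact K\"ahler (terminal) threefolds.'' What you have written is precisely a sketch of that original proof, together with the observation that the negativity lemma and the relevant intersection computations are local over the base and hence transplant to the K\"ahler setting via the machinery of \cite{horing2016minimal}; so your approach is exactly in line with what the paper intends.
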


Let $X$ be a normal compact K\"ahler space.
We define the \textit{augmented irregularity} $q^\circ(X)$ of $X$ to be the supremum of the irregularities $q(X')$ when $X'$ runs over  the finite  quasi-\'etale covers $X'\rightarrow X$ (cf.~\cite[Section 4]{nakayama2009building} and \cite[Definition 2.4]{graf2018algebraic}). 
Here, the irregularity $q(X')$ is defined to be $q(\widetilde{X}')=h^1(\widetilde{X}',\mathcal{O}_{\widetilde{X}'})$, where $\widetilde{X}'\rightarrow X'$ is a resolution. 
In general, the augmented irregularity can be infinite.
For example, one can consider any finite \'etale cover of genus $\ge 2$ curves and apply  Riemann-Hurwitz formula.

Now, we define the weak Calabi-Yau space in the K\"ahler case  (cf.~\cite[Definition 2.9]{nakayama2010polarized}).
\begin{defi}\label{defi-wcy}
\textup{A normal compact K\"ahler space $X$ is said to be a \textit{weak Calabi-Yau space} if $X$ has at worst canonical singularities, $K_X\sim_\mathbb{Q} 0$ and $q^{\circ}(X)=0$. }
\end{defi}

We are  interested in the compact K\"ahler threefolds with only canonical singularities and  vanishing first Chern class (e.g., a weak Calabi-Yau threefold).
The following  theorem gives a nice structure of the Albanese map in this situation (cf.~\cite[Theorem 1.10]{graf2018algebraic}):
 \begin{thm}[cf.~\cite{graf2018algebraic}]\label{thm-graf-thm1.10}
 Let $X$ be a normal compact K\"ahler threefold with only canonical singularities and vanishing first Chern class $c_1(X)=0$.  
 Let $\alpha:X\to A:=\textup{Alb}(X)$ be the Albanese map.
 Then there exists a finite \'etale cover $A_1\to A$ such that $X\times_A A_1$ is isomorphic to $F\times A_1$ over $A_1$, where $F$ is connected.
 In particular, $\alpha$ is a surjective analytic fibre bundle with connected fibres.
 \end{thm}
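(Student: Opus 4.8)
The plan is to prove the statement in three successive stages: first, that the Albanese map $\alpha\colon X\to A$ is surjective with connected fibres; second, that over a big open subset $A^\circ\subseteq A$ (complement of codimension $\ge 2$) the map $\alpha$ is a smooth proper submersion whose fibres are mutually biholomorphic; and third, that the resulting analytic fibre bundle extends over all of $A$ and is trivialised by a finite \'etale base change. The controlling input throughout is that $c_1(X)=0$ together with $K_A\sim 0$ forces the relative canonical class $K_{X/A}=K_X-\alpha^{*}K_A$ to be numerically trivial; by adjunction a general smooth fibre $F$ then satisfies $K_F\sim_{\mathbb{Q}}0$, so $F$ is a connected canonical variety of Calabi--Yau type of dimension $3-\dim A$.

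For the first stage I would record that a canonical threefold with $c_1(X)=0$ has Kodaira dimension zero (by abundance for K\"ahler threefolds of this kind), and then invoke the K\"ahler analogue of Kawamata's theorem that the Albanese map of a variety of Kodaira dimension zero is surjective; connectedness of the fibres follows, after Stein factorisation, from the universal property of $A=\textup{Alb}(X)$.

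The crux is the second stage, the isotriviality. Because the smooth fibres $F$ are of Calabi--Yau type, the only obstruction to isotriviality is positive variation of the family, and the theory of variation for families of canonically trivial fibres (in the spirit of Viehweg--Zuo and Kebekus--Kov\'acs) relates positive variation to positivity of the Kodaira dimension of the base. Here $c_1(X)=0$ is decisive: it forces the direct image sheaves $\alpha_{*}\omega_{X/A}^{\otimes m}$ to be numerically flat rather than big, so that there is no room for variation over a base of Kodaira dimension zero. Since $A$ is an abelian variety, with $\overline{\kappa}(A)=0$, the family must have zero variation, i.e.\ all smooth fibres are biholomorphic to a fixed $F$. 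I expect the genuine difficulty to lie exactly here: transporting the positivity and hyperbolicity machinery to the singular, merely K\"ahler setting, and simultaneously controlling the behaviour of $\alpha$ in codimension one (ruling out multiple or degenerate fibres), which is precisely where both hypotheses, canonical singularities and $c_1(X)=0$, are indispensable.

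For the final stage, once all fibres over $A^\circ$ are biholomorphic to $F$, the theorem of Fischer--Grauert makes $\alpha|_{A^\circ}$ a locally trivial analytic fibre bundle with structure group $\textup{Aut}(F)$; the numerical triviality of $K_{X/A}$ and canonicity forbid genuine degenerations in codimension one, so by a Hartogs/purity argument over the smooth base $A$ the bundle extends to a fibre bundle over all of $A$. Such a bundle is a suspension determined by a monodromy representation $\rho\colon\pi_1(A)\cong\mathbb{Z}^{\oplus 2q}\to\textup{Aut}(F)$, and passing to a finite-index subgroup on which $\rho$ is trivial corresponds to a finite \'etale cover $A_1\to A$ over which the pullback bundle is the product $F\times A_1$. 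The finiteness that is needed is that $\rho$ has finite image: for K3, Enriques, or simply connected Calabi--Yau fibres the relevant (polarised) automorphism group is discrete with finite monodromy, while a torus factor of $F$ would enlarge $H^1$ and hence contradict the maximality of $A=\textup{Alb}(X)$ unless it too is split off after the base change. This yields the asserted isomorphism $X\times_A A_1\cong F\times A_1$, and in particular the analytic fibre bundle structure of $\alpha$ with connected fibres.
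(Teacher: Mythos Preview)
The paper does not prove this statement; it is quoted as \cite[Theorem 1.10]{graf2018algebraic} and used as an input throughout Section~2. There is no proof in the paper to compare against.

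As for your sketch itself, the weakest link is the final stage. You need the monodromy $\rho\colon\pi_1(A)\to\textup{Aut}(F)$ to have finite image, and your justification splits into cases. For K3 fibres the appeal to a polarised automorphism group is reasonable once you observe that a global K\"ahler class on $X$ restricts to a monodromy-invariant K\"ahler class on $F$, and the stabiliser of a K\"ahler class in the automorphism group of a K3 surface is finite. But for fibres with a torus factor your argument breaks: you claim such a factor ``would enlarge $H^1$ and hence contradict the maximality of $A=\textup{Alb}(X)$,'' yet this is false in general. A torus bundle over a torus can have the same irregularity as its base (bielliptic surfaces, or non-split extensions of complex tori, already illustrate this), and the translation subgroup of $\textup{Aut}(F)$ is positive-dimensional, so $\rho$ landing in it need not have finite image---the bundle could represent a non-torsion class in $H^1(A,F)$, in which case no finite \'etale cover trivialises it. Ruling this out requires a different input (in Graf's paper, a case analysis by $q(X)$ together with Chern-class and structural constraints specific to canonical threefolds with $c_1=0$), not the Fischer--Grauert plus monodromy route you outline.
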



Applying Theorem \ref{thm-graf-thm1.10}, we classify the normal compact K\"ahler threefold  with  canonical singularities and a trivial canonical class. 
Indeed, 
one can further follow the idea of \cite[Proposition 2.10]{nakayama2010polarized} to generalize the classification to the case of klt singularities. 
 \begin{pro}\label{keyproposition}
Let $X$ be a normal compact K\"ahler threefold with at worst canonical singularities such that $K_X\sim_{\mathbb{Q}}0$. Then:
\begin{enumerate}
\item[\textup{(1)}] $q(X)\le q^\circ(X)\le 3$ and $q^\circ(X)\neq 2$. In particular, there is a quasi-\'etale Galois cover $X'\rightarrow X$ such that $q(X')=q^\circ(X)$.
\item[\textup{(2)}] If $q^\circ(X)=3$, then $X$ is a quasi-\'etale quotient of a complex torus.
\item[\textup{(3)}] If $q^\circ(X)=0$, then $X$ is projective, i.e., a weak Calabi-Yau threefold is projective.\item[\textup{(4)}] If $q^\circ(X)=1$, then there exists a quasi-\'etale cover $E\times F\rightarrow X$, where $E$ is an elliptic curve and $F$ has at worst canonical singularities such that $q^\circ(F)=0$ and its minimal resolution is a K3 surface.
\end{enumerate}
\end{pro}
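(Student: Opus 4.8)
The strategy is to feed every finite quasi-\'etale cover of $X$ into Graf's fibre-bundle theorem (Theorem~\ref{thm-graf-thm1.10}) and read off each case from the classification of minimal surfaces of Kodaira dimension $0$, in the spirit of \cite[Proposition~2.10]{nakayama2010polarized}. The first point is that the hypotheses are stable under quasi-\'etale covers: if $\pi\colon X'\to X$ is finite and \'etale in codimension one, the ramification formula gives $K_{X'}=\pi^*K_X\sim_{\mathbb Q}0$, and $X'$ remains canonical (discrepancies only improve under quasi-\'etale covers), so $c_1(X')=0$ and Theorem~\ref{thm-graf-thm1.10} applies to $X'$. Hence the Albanese map $\alpha'\colon X'\to A':=\mathrm{Alb}(X')$ is a surjective analytic fibre bundle with connected fibres, and after a finite \'etale base change $A_1\to A'$ one has $X'\times_{A'}A_1\cong F\times A_1$ with $F$ connected. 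As $A_1\to A'$ is \'etale, so is $X'\times_{A'}A_1\to X'$, whence $F\times A_1\to X$ is again quasi-\'etale; moreover $F$ is canonical and, since $A_1$ is a torus, $K_F\sim_{\mathbb Q}0$ with $\dim F=3-q(X')$. In particular $q(X')=\dim A'\le 3$ for every such cover, which already yields $q(X)\le q^\circ(X)\le 3$.

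Next I would exploit the fact that large irregularity forces a torus. If some quasi-\'etale cover $X'$ has $q(X')=3$, then $\alpha'$ is a fibre bundle with $0$-dimensional connected fibres, hence an isomorphism, so $X'\cong\mathrm{Alb}(X')$ is a $3$-torus; if $q(X')=2$, then $F$ is an elliptic curve and $F\times A_1$ (a product of an elliptic curve and an abelian surface) is a $3$-torus. In either case $q^\circ(X)=3$. This immediately gives $q^\circ(X)\ne 2$, and shows that $q^\circ(X)=3$ yields a quasi-\'etale cover $T\to X$ with $T$ a $3$-torus. Replacing $T$ by the Galois closure $\widehat T$ of $T\to X$ — which still dominates $T$ by a finite map that is \'etale in codimension one, hence, by purity of the branch locus over the smooth $T$, \'etale, so that $\widehat T$ is again a $3$-torus — proves (2), and at the same time furnishes the quasi-\'etale Galois cover realizing $q=q^\circ(X)$ required in (1); for $q^\circ(X)=0$ one takes $X'=X$, and for $q^\circ(X)=1$ the Galois closure of any cover with $q=1$ (which exists since $q$ is integer-valued).

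It remains to treat $q^\circ(X)\in\{0,1\}$. Suppose $q^\circ(X)=1$ and pick $X'$ with $q(X')=1$; then $E:=A_1$ is an elliptic curve and $F$ is a canonical surface with $K_F\sim_{\mathbb Q}0$, i.e.\ a surface with at worst Du Val singularities whose minimal resolution is abelian, bielliptic, $K3$ or Enriques. The abelian and bielliptic cases contain no rational curves, so $F$ itself would be abelian or bielliptic and would admit an abelian surface as a quasi-\'etale cover, producing a $3$-torus over $X$ and contradicting $q^\circ(X)=1$; hence the minimal resolution of $F$ is $K3$ or Enriques, and passing to the canonical double cover in the Enriques case (still quasi-\'etale over $X$ after crossing with $E$) we may assume it is $K3$. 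Then $q^\circ(F)=0$, because the smooth locus of $F$ has finite fundamental group (van Kampen, $K3$ being simply connected and the local fundamental groups of Du Val singularities finite), so no quasi-\'etale cover of $F$ can have an abelian or bielliptic minimal resolution. This proves (4). Finally, if $q^\circ(X)=0$ then $X$ is a weak Calabi-Yau threefold; passing to a crepant terminal model and applying the Beauville-Bogomolov-type decomposition for terminal K\"ahler spaces with numerically trivial canonical class, a quasi-\'etale cover splits as a product of torus, Calabi-Yau and irreducible holomorphic-symplectic factors, and $q^\circ=0$ together with $\dim=3$ forces a single strict Calabi-Yau threefold $Z$; then $H^2(Z,\mathcal O_Z)=0$, so $Z$ — hence the terminal model, hence $X$, being Moishezon and K\"ahler — is projective, proving (3).

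The main obstacle is the first step: justifying that Theorem~\ref{thm-graf-thm1.10} may be invoked on \emph{arbitrary} quasi-\'etale covers of $X$ and checking that the \'etale base change built into its conclusion remains quasi-\'etale over $X$; once this ``engine'' is in place, everything else is a finite case-check powered by the classification of minimal surfaces of Kodaira dimension $0$, together with the elementary fact that a product of an elliptic curve and an abelian surface is a torus. The one place where a genuinely external structural input is needed is part (3), the projectivity of a weak Calabi-Yau threefold in the K\"ahler category, which relies on the Beauville-Bogomolov decomposition of K\"ahler spaces with trivial canonical class (and the vanishing $H^2(\mathcal O)=0$ for strict Calabi-Yau threefolds).
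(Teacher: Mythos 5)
Your overall strategy coincides with the paper's: stability of the hypotheses under quasi-\'etale covers, Graf's fibre-bundle theorem (Theorem \ref{thm-graf-thm1.10}) applied to every cover, and the classification of surfaces with $K\sim_{\mathbb{Q}}0$ to settle the cases. Parts (1), (2) and the exclusion of $q^\circ(X)=2$ are fine (the Galois-closure/purity remark is a reasonable way to get the Galois cover, and matches the paper's closure argument), and part (3) reaches the paper's conclusion by heavier machinery (terminalization plus a Beauville--Bogomolov type splitting and $H^2(\mathcal{O})=0$) where the paper simply quotes Graf's projectivity statement; that is a legitimate, if less economical, alternative.

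The genuine gap is in part (4), in your justification of $q^\circ(F)=0$. You assert that once the minimal resolution of $F$ is a K3 surface, the smooth locus $F_{\mathrm{sm}}$ has finite fundamental group ``by van Kampen'', so no quasi-\'etale cover of $F$ can have abelian or bielliptic resolution. This general statement is false. Take a Kummer surface $F=T/\pm 1$: it has sixteen $A_1$ points and its minimal resolution is a smooth K3, yet $T\setminus T[2]\to F_{\mathrm{sm}}$ is an \'etale double cover, so $\pi_1(F_{\mathrm{sm}})$ contains $\pi_1(T\setminus T[2])\cong\mathbb{Z}^{4}$ as an index-two subgroup and is infinite; correspondingly $T\to F$ is a quasi-\'etale cover with $q=2$, so a Du Val K3 can have $q^\circ(F)=2$. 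Knowing that the resolution is simply connected and that the local fundamental groups of Du Val points are finite does not bound $\pi_1(F_{\mathrm{sm}})$: the loops around the singular points have finite local order but may generate an infinite group globally. Hence $q^\circ(F)=0$ cannot be read off from the K3 resolution alone; it must be extracted from the hypothesis $q^\circ(X)=1$, which is exactly how the paper argues: any quasi-\'etale cover $F'\to F$ yields a quasi-\'etale composite $E\times F'\to E\times F\to X$, whose irregularity is at least $1+q(F')$, and $q^\circ(X)=1$ forces $q(F')=0$. With this replacement (and it also subsumes your exclusion of the abelian/bielliptic cases), your part (4) is complete; the Enriques case via the canonical double cover is the same device as the paper's global index-one cover.
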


\begin{proof}
We first show (1). For each quasi-\'etale cover $X'\rightarrow X$, our $K_{X'}\sim_{\mathbb{Q}} 0$ and $X'$ also has only canonical  singularities (cf.~\cite[Proposition 5.20]{kollar2008birational}). 
Then the Albanese map $X'\rightarrow \textup{Alb}(X')$ is surjective holomorphic (cf.~\cite[Theorem 5.22]{kollar2008birational} and Theorem \ref{thm-graf-thm1.10}), hence $q(X')\le\dim X'=3$. So $q(X)\le q^\circ(X)\le 3$ and by the boundedness of $q^\circ(X)$, there exists a quasi-\'etale cover $X'\rightarrow X$ such that $q(X')=q^\circ(X)$. 
Taking the Galois closure $X''\rightarrow X$ of $X'\rightarrow X$ with the induced Galois cover $X''\rightarrow X'$, we get $q^\circ(X)=q(X')\le q(X'')\le q^\circ(X)$. 
Hence, $X''\rightarrow X$ is the cover we are looking for. 

Suppose that $q^\circ(X)=2$. 
Then, there exists a quasi-\'etale Galois cover $X''\rightarrow X$ such that $q^\circ(X)=q(X'')=2$. 
By Theorem \ref{thm-graf-thm1.10} and the adjunction, the Albanese map $\alpha:X''\rightarrow \textup{Alb}(X'')$ is a fibre bundle with the smooth fibre $F$ being an elliptic curve. 
After an \'etale base change $A_1\rightarrow \textup{Alb}(X'')$, there is an \'etale cover $F\times A_1\rightarrow X''$ (cf.~Theorem \ref{thm-graf-thm1.10})  with $q(F\times A_1)=3$, a contradiction to $q^\circ(X)=2$.
So (1) is proved.

(2) follows from Theorem \ref{thm-graf-thm1.10} and (3) follows from  \cite[Remark 2.6]{graf2018algebraic}. 
Now, we prove (4). 
Since $q^\circ(X)=1$, there exists a quasi-\'etale Galois cover $X_1\rightarrow X$ such that $q(X_1)=q^\circ(X)=1$. 
Let $F$ be a fibre of $\alpha: X_1\rightarrow  \textup{Alb}(X_1)$.  
By Theorem \ref{thm-graf-thm1.10}, there exists an \'etale cover $T_1\rightarrow \textup{Alb}(X_1)$ such that $T_1\times F\rightarrow X_1$ is \'etale. 
Then $T_1\times F$ has only canonical singularities (cf.~\cite[Proposition 5.20]{kollar2008birational}). 
In view of the commutative diagram of the resolutions of $T_1\times F$ and $F$, our  $F$ has only canonical singularities. 
Further, $K_{T_1\times F}\sim_{\mathbb{Q}}0$ implies that  $K_F\sim_{\mathbb{Q}}0$, hence $F$ is a surface with at worst canonical singularities such that $K_F\sim_{\mathbb{Q}}0$.
Since $q^\circ(X)=1$, we have $q(F)=q^\circ(F)=0$. So with $F$  replaced by its global index-one cover (such that $K_F\sim 0$), 
the minimal resolution of $F$ is a K3 surface. 
\end{proof}


In what follows, we  discuss the Albanese closure in codimension one of a normal compact K\"ahler threefold (cf.~\cite[Section 2]{nakayama2010polarized} and \cite[Proposition 5.1]{zhong2019intamplified}).  
\begin{defi}[Albanese closure]\label{defi-alb-closure}
\textup{Let $X$ be a normal compact K\"ahler threefold with only  canonical singularities such that $K_X\sim_{\mathbb{Q}}0$. We say that the finite morphism $\tau:\widetilde{X}\rightarrow X$ is the \textit{Albanese closure in codimension one} of $X$ 
if the following assertions  hold.
\begin{enumerate}
\item[\textup{(1)}] $\tau$ is quasi-\'etale;
\item[\textup{(2)}] $q^\circ(X)=q(\widetilde{X})$;
\item[\textup{(3)}] $\tau$ is Galois; and
\item[\textup{(4)}] For any other finite morphism $\tau':X'\rightarrow X$ satisfying the conditions (1) and (2), there exists a quasi-\'etale morphism $\sigma:X'\rightarrow \widetilde{X}$ such that $\tau'=\tau\circ\sigma$.
\end{enumerate}}
\end{defi}

Now, we prove the existence of Albanese closure in the spirit of \cite[Lemma 2.12]{nakayama2010polarized} (cf.~\cite[Proposition 4.3]{nakayama2009building}) for readers' convenience. 

\begin{pro}\label{pro1}
Let $X$ be a normal compact K\"ahler threefold with at worst canonical singularities such that $K_X\sim_\mathbb{Q}0$. Then the Albanese closure $\tau:\widetilde{X}\rightarrow X$ exists.
\end{pro}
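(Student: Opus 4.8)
The plan is to construct $\widetilde{X}$ as a suitable tower of quasi-\'etale covers realizing the augmented irregularity, and then to verify that it satisfies the universal property (4) in Definition \ref{defi-alb-closure}. First I would invoke Proposition \ref{keyproposition}(1): since $q^\circ(X)\le 3$ is finite, there exists a quasi-\'etale Galois cover $\tau:\widetilde{X}\rightarrow X$ with $q(\widetilde{X})=q^\circ(X)$, which immediately gives conditions (1), (2) and (3). The substance of the proof is condition (4): given any other finite $\tau':X'\rightarrow X$ which is quasi-\'etale with $q(X')=q^\circ(X)$, I must produce a quasi-\'etale $\sigma:X'\rightarrow\widetilde{X}$ with $\tau'=\tau\circ\sigma$. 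The standard device, following \cite[Lemma 2.12]{nakayama2010polarized} and \cite[Proposition 4.3]{nakayama2009building}, is to pass to the normalization $Z$ of a connected component of the fibre product $X'\times_X\widetilde{X}$; the two projections $Z\to X'$ and $Z\to\widetilde{X}$ are quasi-\'etale (being pullbacks of quasi-\'etale morphisms, using purity/Nagata--Zariski to control the branch locus over the smooth locus). One then shows $q(Z)=q^\circ(X)$ as well, because $Z\to X'$ forces $q(Z)\ge q(X')=q^\circ(X)$ while $Z\to X$ quasi-\'etale forces $q(Z)\le q^\circ(X)$; hence $q(Z)=q(\widetilde{X})=q^\circ(X)$, so the quasi-\'etale cover $Z\to\widetilde{X}$ does not increase irregularity.

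The key point is then that $Z\rightarrow\widetilde{X}$ must in fact be an isomorphism, after which $\sigma$ is defined as the composite $X'\xleftarrow{\ \sim\ }\! $ (a component of $Z$) or rather $\sigma:X'\to\widetilde{X}$ is obtained by inverting $Z\to X'$ up to the covering structure — more precisely, one uses that $\widetilde{X}\to X$ is \emph{Galois} to descend. Concretely, since $\tau$ is Galois with group $H$, the cover $Z\to\widetilde{X}$ corresponds to an open subgroup of $\pi_1$ of the smooth locus, and the equality of irregularities together with maximality of $q(\widetilde{X})=q^\circ(X)$ over \emph{all} quasi-\'etale covers of $X$ shows this cover is trivial once we further replace $\widetilde{X}$ by the Galois closure of the compositum of all quasi-\'etale covers attaining $q^\circ(X)$ — but a cleaner route, which I would follow, is to choose $\widetilde{X}\to X$ from the start to be a quasi-\'etale Galois cover that is \emph{maximal} among those with $q=q^\circ(X)$, in the sense that it dominates every other such cover. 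Existence of such a maximal one follows because two quasi-\'etale covers $X_1\to X$, $X_2\to X$ with $q(X_i)=q^\circ(X)$ can be combined: the normalization of a component of $X_1\times_X X_2$ is again quasi-\'etale over $X$ and (by the sandwich argument above) still has irregularity $q^\circ(X)$, and there is an a priori bound — the degree of such covers is controlled because the Albanese map of the maximal cover is already a fibre bundle by Theorem \ref{thm-graf-thm1.10}, so $q^\circ$ is realized by a finite cover of bounded degree. Taking the Galois closure of this maximal cover yields $\widetilde{X}$.

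The verification of (4) then runs as follows: for $\tau':X'\to X$ quasi-\'etale with $q(X')=q^\circ(X)$, replace $X'$ by its Galois closure $X''\to X$ (which still has $q=q^\circ(X)$ by the sandwich argument, since $q^\circ(X)\le q(X'')\le q^\circ(X)$). The compositum of $\widetilde{X}$ and $X''$ over $X$, normalized, is quasi-\'etale over $X$ with irregularity $q^\circ(X)$, hence by maximality equals $\widetilde{X}$ up to isomorphism over $X$; this gives a morphism $\widetilde{X}\to X''$ over $X$, and composing with $X''\to X'$... no — rather it gives $\widetilde{X}$ dominating $X''$, hence dominating $X'$, i.e.\ a quasi-\'etale $\widetilde{X}\to X'$ over $X$. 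To get $\sigma$ in the stated direction $X'\to\widetilde{X}$ I instead note that $\widetilde{X}$ is characterized as the \emph{smallest} quasi-\'etale Galois cover with $q=q^\circ$; I would therefore define $\widetilde{X}$ at the outset as a \emph{minimal} such cover (intersection of the corresponding subgroups), and then for any $\tau':X'\to X$ with $q(X')=q^\circ(X)$, the component $Z$ of $X'\times_X\widetilde{X}$ maps quasi-\'etale to $X'$ with $q(Z)=q^\circ(X)$ and the induced cover $X'\to ?$... the honest statement is that $Z\to X'$ is an isomorphism (minimality of $\widetilde{X}$ forces the pullback cover to be trivial over $X'$), giving the section $\sigma=(\text{pr}_{\widetilde{X}})\circ(\text{pr}_{X'})^{-1}:X'\to\widetilde{X}$ with $\tau'=\tau\circ\sigma$. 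The main obstacle is precisely this finiteness/minimality bookkeeping — ensuring that $q^\circ(X)$ is attained by a cover that is minimal (not just some cover), which requires knowing the relevant covers have bounded degree; here Theorem \ref{thm-graf-thm1.10} is essential, as it shows that once $q$ reaches $q^\circ(X)$ the Albanese becomes a fibre bundle, so no further quasi-\'etale cover can increase $q$, pinning down the tower. The rest (quasi-\'etaleness of fibre products over the smooth locus, via purity of the branch locus) is routine.
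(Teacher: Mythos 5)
Your construction of $\widetilde{X}$ and verification of (1)--(3) matches the paper (take a quasi-\'etale Galois cover attaining $q^\circ(X)$ via Proposition \ref{keyproposition}(1)), but your treatment of the universal property (4) has a genuine gap. The crucial step you assert --- that the pullback cover $Z\to X'$ is an isomorphism because $\widetilde{X}$ is ``minimal'' --- is exactly the statement to be proven, restated: in terms of subgroups of the fundamental group of the quasi-\'etale site, $Z\to X'$ being trivial means precisely that the subgroup corresponding to $X'$ is contained in the normal subgroup corresponding to $\widetilde{X}$, i.e.\ that $X'$ factors through $\widetilde{X}$. Moreover, the existence of a ``minimal'' cover among those attaining $q^\circ(X)$ is itself unproved in your argument: your sandwich argument only shows the family of covers with $q=q^\circ(X)$ is closed under passing to \emph{larger} covers (fibre products, Galois closures), not under the meets needed to produce a smallest element --- indeed $X$ itself, the smallest cover of all, may well have $q(X)<q^\circ(X)$, so one cannot just ``intersect'' (and note that intersecting the corresponding subgroups would in any case give the compositum, i.e.\ a \emph{maximal} cover, the wrong direction). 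The appeal to bounded degree via Theorem \ref{thm-graf-thm1.10} does not repair this.

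The missing idea, which is how the paper proceeds, is to build the minimal cover explicitly via the Albanese action: starting from a quasi-\'etale Galois cover $X_1\to X$ with $q(X_1)=q^\circ(X)$ and Galois group $G_1$, let $H_1\subseteq G_1$ be the kernel of the induced action of $G_1$ on $\textup{H}_1(\textup{Alb}(X_1),\mathbb{Z})$ and set $\widetilde{X}:=X_1/H_1$. Since $H_1$ acts on $\textup{Alb}(X_1)$ by translations, $\textup{Alb}(X_1)/H_1$ is again a torus of the same dimension and is the Albanese of $\widetilde{X}$, so $q(\widetilde{X})=q^\circ(X)$ (this is what guarantees the ``minimal'' candidate still attains $q^\circ$). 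For (4), given any quasi-\'etale $X'\to X$ with $q(X')=q^\circ(X)$, one passes to a Galois closure $X_2\to X$ of $X_1\times_X X'$ and observes that $\textup{Gal}(X_2/X')$ acts trivially on $\textup{H}_1(\textup{Alb}(X_2),\mathbb{Z})$ (because $q(X')=q(X_2)$ forces it to act by translations), hence is contained in the kernel $H_2$, which pulls back $H_1$; this gives the factorization $X'=X_2/\textup{Gal}(X_2/X')\to X_2/H_2\cong X_1/H_1=\widetilde{X}$ over $X$. Without this translation/kernel mechanism, your argument does not close.
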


\begin{proof}
If $q^\circ(X)=0$, then $X$ is projective (cf.~Proposition \ref{keyproposition} (3)) and our result follows immediately from \cite[Lemma 2.12]{nakayama2010polarized}. So we may assume $q^\circ(X)>0$. By Proposition \ref{keyproposition}, there exists a quasi-\'etale Galois cover $X_1\rightarrow X$ with the Galois group $G_1$ such that $q(X_1)=q^\circ(X)$. Then $K_{X_1}\sim_{\mathbb{Q}}0$ and $X_1$ also has at worst canonical singularities. Note that $G_1=\textup{Gal}(X_1/X)$ acts naturally on the Albanese torus $\textup{Alb}(X_1)$ (by the universal property of Albanese torus) and hence induces an action on $\textup{H}_1(\textup{Alb}(X_1),\mathbb{Z})$. If we write $\textup{Alb}(X_1):=V/\Lambda$ for a vector space $V$ and a lattice $\Lambda$, then $\textup{H}_1(\textup{Alb}(X_1),\mathbb{Z})\cong \Lambda$. Hence, there exists a natural homomorphism $\varphi: G_1\rightarrow \textup{Aut}(\Lambda)$.

 Let $H_1:=\ker\varphi$ and $\widetilde{X}:=X_1/H_1$. Then $\textup{Alb}(X_1)/H_1$ is  a torus, since  $H_1$ acts on $\textup{Alb}(X_1)$ as translations. Then, $\textup{Alb}(X_1)/H_1$ is the Albanese torus of $\widetilde{X}$ by the universality, hence $q(\widetilde{X})=q(X_1)=q^\circ(X)$. Since $H_1$ is normal in $G_1$,  $\widetilde{X}\rightarrow X$ is Galois with the induced Galois group $G_1/H_1$. 
Therefore,  $\widetilde{X}\rightarrow X$ satisfies conditions (1), (2) and (3).

Now, let $X'\rightarrow X$ be an arbitrary quasi-\'etale cover such that $q(X')=q^\circ(X)$ and take the Galois closure $X_2\rightarrow X$ of $X_1\times_X X'\to X$ with the induced Galois covers $X_2\rightarrow X'$ and $X_2\rightarrow X_1$. 
Let $G_2:=\textup{Gal}(X_2/X)$ (resp.~$G_2':=\textup{Gal}(X_2/X')$) be the Galois group of $X_2\rightarrow X$ (resp.~$X_2\rightarrow X'$). 
Then, with the same argument as above, we  denote by $H_2\subseteq G_2$ the kernel of $G_2\rightarrow \textup{Aut}(\textup{H}_1(X_2,\mathbb{Z}))$. Note that $\textup{Alb}(X_2)\rightarrow \textup{Alb}(X_1)$ is an isogeny by choosing a proper origin on $\textup{Alb}(X_1)$. Thus, $H_2$ is the pull-back of $H_1\subseteq G_1$ under $X_2\rightarrow X_1$ and then $X_2/H_2\cong X_1/H_1=\widetilde{X}$. Moreover, $G_2'$ acts on $\textup{H}_1(\textup{Alb}(X_2),\mathbb{Z})$ trivially by the choice of $X'$ (so that $q(X')=q^\circ(X)$). As a result, $G_2'\subseteq H_2$ and we have a natural factorization $X'\rightarrow \widetilde{X}\rightarrow X$.
So (4) is satisfied.
\end{proof}

In the following, we recall the common nef eigenvectors of an automorphism subgroup. Actually, this part will not be used in proving Theorem \ref{thm1.1} and Proposition \ref{thm1.2}. However, we still formulate the extended Proposition \ref{pro2.8} to compare with earlier results. 

We follow \cite[the proof of Theorem 1.2]{zhang2013algebraic} to find the nef classes $\xi$ which are common eigenvectors of $G$ in our present case.
Suppose  $(X,G)$ satisfies \textbf{Hyp\,(A)}. Let $\pi:\widetilde{X}\rightarrow X$ be a $G$-equivariant resolution (cf.~\cite[Theorem 2.0.1]{wlodarczk2009resolution}). Applying the proof of \cite[Theorems 4.3 and 4.7]{dinh2004groupes} to the action of $G$ on the pullback $\pi^*\textup{Nef}(X)$ of the nef cone of $X$, we get nef classes $\pi^*\xi_i~(1\le i\le 3)$ on $\widetilde{X}$ as common eigenvectors of $G$ such that  the cup product $\xi_1\cup\xi_2\cup\xi_3\neq 0$, where $\xi_i$ are nef classes on $X$. 
Since $\xi_i~(1\le i\le 3)$ are eigenvectors, for each $g\in G$, we can write $g^*\xi_i=\chi_i(g)\xi_i$ with the characters $\chi_i:G\to\mathbb{R}_{>0}$. 
By the projection formula, our $\chi_1\chi_2\chi_3=1$. 
Let 
\begin{equation}\label{eq1}
	\xi:=\xi_1+\xi_2+\xi_3.
\end{equation}
 Then $\xi^3\ge \xi_1\cup\xi_2\cup\xi_3>0$, which implies that $\xi$ is a nef and big class on $X$.

We borrow the following results from \cite[Lemma 3.7]{zhang2016ndimension} and the proofs therein can be adapted into our present version. Readers may refer to \cite[Section 5]{graf2019finite} for the information of Chern classes on singular spaces.

\begin{lem}\label{lemperiodic}
Suppose that $(X,G)$ satisfies \textbf{Hyp\,(A)}. 
Then, for the $\xi$ in Equation (\ref{eq1}), the following assertions hold.
\begin{enumerate}
\item[\textup{(1)}] For every $G$-periodic $(k,k)$-class $\eta$ with $k=1$ or $2$, $\xi^{3-k}\cdot \eta=0$; in particular, $\xi^2\cdot c_1(X)=\xi\cdot c_1(X)^2=0$. 
\item[\textup{(2)}]	Let $Z\subseteq X$ be a $G$-periodic positive-dimensional proper subvariety of $X$. Then $\xi^{\dim Z}\cdot Z=0$.
\end{enumerate}
\end{lem}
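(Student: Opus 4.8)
The plan is to mimic the argument of \cite[Lemma 3.7]{zhang2016ndimension}, working on the $G$-equivariant resolution $\pi:\widetilde{X}\to X$ where the $\pi^*\xi_i$ are genuine nef classes that are common eigenvectors of $G$ with positive characters $\chi_i$ satisfying $\chi_1\chi_2\chi_3=1$. The key structural input is that $G\cong\mathbb{Z}^2$ has rank $n-1=2$ acting by characters on a $1$-dimensional ray, so that for a fixed index $i$ the character $\chi_i:G\to\mathbb{R}_{>0}$ is a homomorphism whose image, together with the relation $\chi_1\chi_2\chi_3=1$, forces the three characters to be multiplicatively independent in a strong sense: no nontrivial monomial $\chi_1^{a}\chi_2^{b}\chi_3^{c}$ with all exponents equal can be trivial unless $a=b=c=0$, and more to the point, one can choose $g\in G$ with $\chi_i(g)>1>\chi_j(g)$ for prescribed distinct $i,j$ (this is exactly the positivity-of-entropy plus maximal-rank mechanism from \cite[Theorems 4.3 and 4.7]{dinh2004groupes}).

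For part (1), I would argue as follows. Suppose $\eta$ is a $G$-periodic $(k,k)$-class with $k=1$; after replacing $G$ by a finite-index subgroup (harmless, since $\xi$ and the conclusion are unchanged) we may assume $g^*\eta=\eta$ for all $g\in G$. Consider the number $\xi_i\cdot\xi_j\cdot\eta$ for $i\neq j$ (pulled back to $\widetilde{X}$ so all products make sense, using that $\pi^*\xi$ and periodicity descend compatibly). Applying $g^*$ and using $g^*\xi_i=\chi_i(g)\xi_i$, $g^*\eta=\eta$, and that $g^*$ preserves cup products, we get $\xi_i\cdot\xi_j\cdot\eta=\chi_i(g)\chi_j(g)\,\xi_i\cdot\xi_j\cdot\eta$ for every $g$. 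Choosing $g$ with $\chi_i(g)\chi_j(g)\neq1$ — possible because $\chi_i\chi_j=\chi_k^{-1}$ is a nontrivial character (if it were trivial then all $\chi$'s would be trivial on a finite-index subgroup, contradicting positive entropy) — forces $\xi_i\cdot\xi_j\cdot\eta=0$ for all pairs $i\neq j$. Expanding $\xi^2\cdot\eta=(\sum\xi_i)^2\cdot\eta$ and noting each square term $\xi_i^2\cdot\eta$ must also vanish by a separate application of the same eigenvalue trick with the character $\chi_i^2$ (again nontrivial), we conclude $\xi^2\cdot\eta=0$. The case $k=2$ is dual: for a $G$-periodic $(2,2)$-class $\eta$ one looks at $\xi_i\cdot\eta$ and runs the identical character argument with the single nontrivial character $\chi_i$, obtaining $\xi_i\cdot\eta=0$ for each $i$ and hence $\xi\cdot\eta=0$. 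The stated consequence $\xi^2\cdot c_1(X)=\xi\cdot c_1(X)^2=0$ then follows since $c_1(X)$ (interpreted via the resolution and the Chern class formalism of \cite[Section 5]{graf2019finite}) is $G$-invariant, being canonically attached to $X$, so $c_1(X)$ is a $G$-periodic $(1,1)$-class and $c_1(X)^2$ a $G$-periodic $(2,2)$-class.

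For part (2), let $Z\subseteq X$ be a $G$-periodic positive-dimensional proper subvariety; again pass to a finite-index subgroup so that every $g\in G$ maps $Z$ to itself, hence the cycle class $[Z]$ is a $G$-invariant class of dimension $d:=\dim Z\in\{1,2\}$, i.e.\ a $G$-periodic $(3-d,3-d)$-class. If $d=2$, then $[Z]$ is a $G$-periodic $(1,1)$-class and $\xi^2\cdot[Z]=0$ is immediate from part (1); noting $\xi^{\dim Z}\cdot Z=\xi^2\cdot[Z]$ finishes this case. If $d=1$, then $[Z]$ is a $G$-periodic $(2,2)$-class and part (1) gives $\xi\cdot[Z]=0$, i.e.\ $\xi^{\dim Z}\cdot Z=0$. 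So in both cases the conclusion reduces directly to part (1) applied to the class of $Z$.

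The main obstacle I anticipate is not the character bookkeeping — that is the routine core — but rather justifying that all the intersection-theoretic manipulations ($g^*$ commuting with cup product, the class $c_1(X)$ and the cycle class $[Z]$ being well-defined, $G$-equivariant, and of the expected bidegree on the possibly-singular $X$) are legitimate. The clean way is to do everything upstairs on the $G$-equivariant resolution $\widetilde{X}$, replacing $\xi$ by $\pi^*\xi$, $\eta$ by its pullback, and $[Z]$ by $\pi^*[Z]$ or the class of the strict transform, then invoke the projection formula together with \cite[Proposition A.2]{nakayama2009building} and the singular Chern class theory of \cite[Section 5]{graf2019finite} to transfer the vanishing back to $X$; one must also check that "$G$-periodic" (finitely many $g$-translates) lets us pass to a finite-index subgroup acting by honest invariance, and that the characters $\chi_i$ genuinely remain nontrivial on finite-index subgroups, which is where positive entropy of every nontrivial element of $G$ is essential. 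Once these foundational points are in place — and they are exactly parallel to \cite{zhang2016ndimension} — the proof is short.
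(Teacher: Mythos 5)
Your proof is correct and is essentially the paper's own route: the paper simply defers to the character/eigenvalue argument of \cite[Lemma 3.7]{zhang2016ndimension}, and that is exactly what you reproduce (eigen-characters $\chi_i$ with $\chi_1\chi_2\chi_3=1$, vanishing of each monomial $\xi_{i}\cdot\xi_{j}\cdot\eta$ via a $g$ with the relevant product of characters $\neq 1$, reduction of (2) to (1) through the periodic cycle class, all carried out on the $G$-equivariant resolution with the projection formula). One caveat: your parenthetical justification that $\chi_i\chi_j=\chi_k^{-1}$ is nontrivial (``otherwise all $\chi$'s would be trivial on a finite-index subgroup'') is not a valid deduction, and the clause about monomials ``with all exponents equal'' is garbled (the intended fact is that $\chi_1^a\chi_2^b\chi_3^c\equiv 1$ forces $a=b=c$); the correct reason in both places is the source you already cite, namely that by \cite[Theorems 4.3 and 4.7]{dinh2004groupes} the map $g\mapsto(\log\chi_1(g),\log\chi_2(g),\log\chi_3(g))$ embeds $G$ as a discrete, hence spanning, rank-$2$ lattice of the plane $\{x_1+x_2+x_3=0\}$, so no product of a proper nonempty subset of the $\chi_i$ is trivial on $G$, nor on any finite-index subgroup since the characters take values in $\mathbb{R}_{>0}$.
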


The lemma below is a direct consequence of \cite[Th\'eor\`eme 3.1 and Corollaire 3.4]{dinh2004groupes}. 
It is also known as a generalization of Hodge-Riemann Theorem.

\begin{lem}\label{hodgeriemann}
Let $X$ be a normal compact K\"ahler threefold, $u_i\in \textup{H}^{1,1}_{\textup{BC}}(X)~(i=1, 2)$ nef classes and $\eta\in \textup{H}^{1,1}_{\textup{BC}}(X)$ such that $u_1\cup u_2\cup\eta=0$. Then we have:
\begin{enumerate}
\item[\textup{(1)}] $u_1\cup\eta^2\le 0$.
\item[\textup{(2)}] Suppose that $u_1=u_2$ is nef and big, and $\eta$ is nef. Then $u_1\cup\eta^2=0$ holds if and only if $\eta\equiv 0$, i.e., $\eta\cdot \Gamma=0$ for each $\Gamma\in\textup{N}_1(X)$. 
\end{enumerate}
\end{lem}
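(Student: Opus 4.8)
The plan is to reduce to the smooth setting and then invoke the generalized Hodge--Riemann relations of Dinh--Sibony. First I would fix a resolution $\pi:\widetilde{X}\to X$ (cf.~\cite{wlodarczk2009resolution}) and pull every class back. Since $\pi^*$ preserves Bott--Chern classes, sends nef (resp.~nef and big) classes to nef (resp.~nef and big) classes, and preserves all triple intersection numbers — so that $u_1\cup\eta^2=\pi^*u_1\cup(\pi^*\eta)^2$ and $u_1\cup u_2\cup\eta=\pi^*u_1\cup\pi^*u_2\cup\pi^*\eta$ (the same computation as in Remark~\ref{cm_rmk_bimero_invariant}) — both hypotheses and conclusions are unchanged. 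Moreover, by the projection formula and surjectivity of $\pi_*:\textup{N}_1(\widetilde{X})\to\textup{N}_1(X)$, the class $\pi^*\eta$ is numerically trivial on $\widetilde{X}$ if and only if $\eta\equiv 0$ on $X$. Hence I may assume $X=\widetilde{X}$ is a smooth compact K\"ahler threefold.

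The key input is \cite[Th\'eor\`eme 3.1 and Corollaire 3.4]{dinh2004groupes}, which I would phrase as follows. For a nef class $u$ on a smooth compact K\"ahler threefold the symmetric bilinear form $M_u(\alpha,\beta):=u\cup\alpha\cup\beta$ on $\textup{H}^{1,1}_{\textup{BC}}(X)$ has at most one positive eigenvalue. When $u$ is K\"ahler this is the classical Hodge index theorem, giving signature $(1,h^{1,1}-1)$; the nef case follows by approximating $u$ by the K\"ahler classes $u+\varepsilon\omega$ and letting $\varepsilon\to 0$, the property of having at most one positive direction being closed under limits. Equivalently, whenever $u\cup\alpha^2\ge 0$ one has the reversed Cauchy--Schwarz (mixed Hodge--Riemann) inequality
\begin{equation*}
(u\cup\alpha\cup\beta)^2\ \ge\ (u\cup\alpha^2)(u\cup\beta^2),
\end{equation*}
and, when $u$ is moreover big, the equality part of \cite[Corollaire 3.4]{dinh2004groupes} identifies the radical of $M_u$ with the subspace of numerically trivial classes.

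For assertion (1) I would apply the inequality with $u=u_1$, $\alpha=u_2$, $\beta=\eta$. As $u_1,u_2$ are nef we have $u_1\cup u_2^2\ge 0$, so the inequality is available and, using the hypothesis $u_1\cup u_2\cup\eta=0$, gives $0\ge (u_1\cup u_2^2)(u_1\cup\eta^2)$; since $u_1\cup u_2^2\ge 0$ this forces $u_1\cup\eta^2\le 0$. For assertion (2) set $u:=u_1=u_2$, now nef and big, so $M_u(u,u)=u^3>0$: the vector $u$ realizes the unique positive direction of $M_u$, whence $M_u$ is negative semi-definite on the orthogonal complement $u^{\perp}$ and $\textup{H}^{1,1}_{\textup{BC}}(X)=\mathbb{R}u\oplus u^{\perp}$. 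The hypothesis $u^2\cup\eta=M_u(u,\eta)=0$ says $\eta\in u^{\perp}$, so $u\cup\eta^2=M_u(\eta,\eta)\le 0$. If moreover $u\cup\eta^2=0$, then a negative semi-definite form vanishing on $\eta$ puts $\eta$ in the radical of $M_u|_{u^{\perp}}$; together with $M_u(u,\eta)=0$ and the direct-sum decomposition this yields $u\cup\eta\cup\gamma=0$ for all $\gamma$, and the equality part of \cite[Corollaire 3.4]{dinh2004groupes} then gives $\eta\equiv 0$. The converse is trivial.

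The hard part will be the genuine degeneracy that $M_u$ may acquire when $u$ is nef but not K\"ahler: the clean Lorentzian picture (one positive eigenvalue, everything else negative definite) is only automatic in the K\"ahler case, while for a merely big and nef $u$ one must control the radical of $M_u$. Concretely, the final step of (2) — upgrading ``$u\cup\eta\cup\gamma=0$ for all $\gamma$'' to the numerical triviality of $\eta$, rather than merely $\eta\in\ker M_u$ — is exactly where the refined null-locus analysis of \cite[Corollaire 3.4]{dinh2004groupes} is indispensable, since over the non-ample locus the naive Hodge index form can degenerate. The same degeneracy is what one must watch in (1) in the boundary case $u_1\cup u_2^2=0$, where the inequality becomes vacuous and one must appeal to the positivity carried by the classes actually occurring in the application (e.g.~the nef and big $\xi$ of Equation~\eqref{eq1}).
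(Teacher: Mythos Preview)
Your reduction to a smooth model and your treatment of part~(1) are essentially the same as the paper's: both pull back along a resolution and invoke \cite[Corollaire~3.4]{dinh2004groupes}. Your Cauchy--Schwarz reformulation is fine when $u_1\cup u_2^2>0$ but, as you yourself note, leaves the case $u_1\cup u_2^2=0$ open; the paper simply cites Corollaire~3.4 without re-deriving it and so avoids this issue.

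For part~(2) the paper takes a genuinely different route, and the difference matters. You argue with the nef and big class $u$ directly: the signature argument correctly yields $u\cup\eta\cup\gamma=0$ for all $\gamma\in H^{1,1}_{\textup{BC}}$, and you then attribute the passage from ``$\eta$ lies in the radical of $M_u$'' to ``$\eta\equiv 0$'' to an ``equality part'' of \cite[Corollaire~3.4]{dinh2004groupes}. But that rigidity statement is available in Dinh--Sibony for \emph{K\"ahler} classes (Th\'eor\`eme~3.1), not for merely nef and big ones; indeed, for $u$ only nef and big the radical of $M_u$ can be nonzero (think of $\pi^*(\text{ample})$ on a blow-up and the exceptional class), so some use of the nefness of $\eta$ beyond the quadratic-form picture is unavoidable. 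The paper supplies exactly this missing ingredient: on a suitable resolution one writes $\pi^*u=(\pi^*u-E)+E$ with $\pi^*u-E$ K\"ahler and $E\ge 0$ effective exceptional (cf.~\cite{boucksom2004pseudo}), and then uses the nefness of $\pi^*u-E$ and $\pi^*\eta$ together with $E\ge 0$ and the projection formula to check
\[
(\pi^*u-E)^2\cup\pi^*\eta=0\qquad\text{and}\qquad(\pi^*u-E)\cup(\pi^*\eta)^2=0,
\]
after which \cite[Th\'eor\`eme~3.1]{dinh2004groupes} applied to the K\"ahler class $\pi^*u-E$ gives $\pi^*\eta\equiv 0$, hence $\eta\equiv 0$. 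In short, you have correctly located the hard point of (2) in your final paragraph, but your proposed resolution (appealing to Corollaire~3.4) does not close it; the paper's $E$-subtraction trick is precisely what does.
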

\begin{proof}
The first statement is easy to see by pulling back $u_i$ and $\eta$ to a smooth model $\widetilde{X}$ of $X$ (cf.~\cite[Proposition 2.6]{zhong2019intamplified} and \cite[Corollaire 3.4]{dinh2004groupes}). 
For the second statement, note that for every big class $u$, there exists a suitable resolution $\pi:\widetilde{X}\rightarrow X$ and an effective $\mathbb{R}$-divisor $E$ on $\widetilde{X}$ supported in the exceptional locus of $\pi$  such that $\pi^*u-E$ is a K\"ahler  class on $\widetilde{X}$ (cf.~\cite[Proposition 2.6]{zhong2019intamplified} and \cite[Proof of Theorem 3.1]{boucksom2004pseudo}). 
By the projection formula,  $(\pi^*u_1)^2\cup\pi^*\eta=\pi^*u_1\cup(\pi^*\eta)^2=0$. 
So it is easy to verify that $(\pi^*u_1-E)^2\cup\pi^*\eta=0$ and $(\pi^*u_1-E)\cup(\pi^*\eta)^2=0$, noting that  $\pi^*u_1-E$ and $\pi^*\eta$ are nef and $E\ge 0$.
Then, we apply \cite[Th\'eor\`eme 3.1]{dinh2004groupes} for $\pi^*u_1-E$ and $\pi^*\eta$ to conclude that $\pi^*\eta\equiv 0$. Hence $\eta\equiv 0$ by the projection formula (cf.~\cite[Proposition 3.14]{horing2016minimal}). 
\end{proof}

  The following proposition gives a sufficient condition for $X$ to be a $Q$-torus under the assumption of \textbf{Hyp\,(A)}. 
Note that, $\xi$ being K\"ahler implies that every $G$-periodic proper subvariety of $X$ is a single point (cf.~Lemma \ref{lemperiodic}).

\begin{pro}\label{pro2.8}
Suppose $(X,G)$ satisfies \textbf{Hyp\,(A)}. Suppose further that $X$ is minimal (i.e., $K_X$ is nef) and the $\xi$ in Equation (\ref{eq1}) is a K\"ahler class on $X$.
Then, $X\cong T/F$, where $T$ is a complex torus and $F$ is a finite group whose action on $T$ is free outside a finite subset of $T$. Further, the  group $G$ (acting on $X$) lifts to $G_T\subseteq \textup{Aut}(T)$ on $T$.
\end{pro}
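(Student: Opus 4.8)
The plan is to follow the classical "rigidity of abelian actions of maximal rank" argument of Nakayama–Zhang, adapted to the K\"ahler setting via the tools assembled above. First I would use the hypothesis that $\xi$ is K\"ahler together with Lemma \ref{lemperiodic}(2) to conclude that every $G$-periodic proper positive-dimensional subvariety of $X$ has $\xi$-volume zero, which is impossible for a K\"ahler class; hence every $G$-periodic proper subvariety is a single point. In particular, applying Lemma \ref{lemperiodic}(1) with $k=1,2$ gives $\xi^2\cdot c_1(X)=\xi\cdot c_1(X)^2=0$. Since $K_X$ is nef, $-c_1(X)=K_X\in\textup{H}^{1,1}_{\textup{BC}}(X)$ is a nef class and $\xi$ is nef and big with $\xi^2\cdot K_X=0$; Lemma \ref{hodgeriemann}(2) (with $u_1=u_2=\xi$, $\eta=K_X$) then forces $K_X\equiv 0$, so $K_X\sim_{\mathbb{Q}}0$ (using terminal, hence canonical, singularities and the abundance-type statement available for minimal K\"ahler threefolds). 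Thus $X$ falls under the scope of Proposition \ref{keyproposition} and Proposition \ref{pro1}.

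Next I would pin down $q^\circ(X)$. The vanishing $\xi^2\cdot c_1(X)=0$ and, more importantly, the fact that $G$ has maximal rank $2$ with all nontrivial elements of positive entropy, should be leveraged via the Albanese map: if $q^\circ(X)>0$, pass to the Albanese closure $\widetilde X\to X$ from Proposition \ref{pro1}, lift $G$ to $\widetilde X$, and analyze the Albanese fibration $\widetilde X\to\textup{Alb}(\widetilde X)$, which by Theorem \ref{thm-graf-thm1.10} is an analytic fibre bundle. A positive-entropy automorphism cannot preserve a nontrivial fibration with fibres of dimension $\le 1$ while acting with positive entropy on both base and fibre in a way compatible with rank $2$; the standard dynamical-degree product formula forces either $q^\circ=0$ or $q^\circ=3$. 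But Proposition \ref{keyproposition}(3) says $q^\circ=0$ implies $X$ is projective and a weak Calabi-Yau threefold — and a weak Calabi-Yau threefold has, by the Beauville–Bogomolov-type decomposition, at most rank-$1$ abelian groups of positive-entropy automorphisms (its dynamical behaviour is too constrained), contradicting rank $2$; similarly $q^\circ=1$ (Proposition \ref{keyproposition}(4), $E\times F$ with $F$ K3) is excluded because an abelian group of positive-entropy automorphisms of $E\times F$ has rank $\le 1$ on the K3 factor plus $0$ on the elliptic factor. Hence $q^\circ(X)=3$, and Proposition \ref{keyproposition}(2) gives that $X$ is a quasi-\'etale quotient $T/F$ of a complex torus, with $F$ acting freely in codimension one; since terminal surface-type quotient singularities in dimension $3$ coming from torus quotients are isolated, $F$ acts freely outside a finite set.

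Finally I would lift $G$. Let $T\to X=T/F$ be the quasi-\'etale Galois cover with group $F$. Each $g\in G\subseteq\textup{Aut}(X)$ lifts, after pulling back the cover, to a bimeromorphic self-map of $T$; since $T$ is minimal with $K_T\sim 0$, Theorem \ref{lemisomorphic} shows any such lift is an isomorphism in codimension one, and as $T$ is smooth (a torus) it extends to a genuine automorphism of $T$ (using that bimeromorphic self-maps of a variety isomorphic in codimension one to a minimal variety with nef canonical class, over a smooth total space, are biholomorphic — this is exactly the mechanism of Lemma \ref{splitting lemma}). Choosing lifts compatibly, one gets a subgroup $G_T\subseteq\textup{Aut}(T)$ surjecting onto $G$ with kernel $F$, i.e. $G\cong G_T/F$. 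The main obstacle, I expect, is the case analysis for $q^\circ(X)\in\{0,1\}$: ruling these out cleanly requires knowing that weak Calabi-Yau and K3-type threefolds cannot carry a rank-$2$ abelian group of positive-entropy automorphisms, which ultimately rests on the bound of Dinh–Sibony together with the structure of their cohomology — I would isolate this as a separate lemma rather than grind it inline here.
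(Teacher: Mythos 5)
There is a genuine gap at the decisive point of your case analysis: the exclusion of $q^\circ(X)=0$. You assert that a weak Calabi--Yau threefold ``has, by the Beauville--Bogomolov-type decomposition, at most rank-$1$ abelian groups of positive-entropy automorphisms,'' but no such statement follows from any decomposition theorem; ruling out rank-$2$ actions on weak Calabi--Yau threefolds is precisely the hard core of the problem (in the projective case it is the content of \cite[Theorem 1.1]{zhang2016ndimension}, whose proof in the $c_1=0$ case itself goes through a $c_2$-vanishing and a torus-quotient characterization). Deferring it to ``a separate lemma'' leaves the main case unproved; the correct fix along your route would be to quote Zhang's projective theorem after noting that a weak Calabi--Yau threefold is projective by Proposition \ref{keyproposition}(3), as the paper does elsewhere, but you did not.

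More importantly, the paper's proof of Proposition \ref{pro2.8} does not run your Albanese-closure case analysis at all, and the hypothesis that $\xi$ is K\"ahler is used for much more than deducing $c_1(X)=0$ (which is the only place you use it). After getting $K_X\sim_{\mathbb{Q}}0$ exactly as you do, the paper passes to the global index-one cover $f:X'\to X$ (lifting $G$ via its action on $K_X$), notes that $\xi'=f^*\xi$ is again K\"ahler, and shows $c_2(X')\cdot\xi'=0$: each summand $\xi_i'=f^*\xi_i$ is a common eigenvector with some $g_i^*\xi_i'=\alpha_i\xi_i'$, $\alpha_i>1$, so $c_2(X')\cdot\xi_i'=0$ by \cite[Lemma 5.6]{graf2019finite}. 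Then \cite[Corollary 1.2]{graf2019finite} immediately yields $X\cong T/F$ with $F$ acting freely outside a finite set (a conclusion you only obtain after a further patch via terminality), and the lifting of $G$ to $\textup{Aut}(T)$ is a citation to \cite[\S 2.14]{zhang2013algebraic}. So even setting aside the $q^\circ=0$ gap, your proposal replaces the short $c_2$-vanishing argument, which is the whole point of assuming $\xi$ K\"ahler, by a reproof of the heavier machinery reserved for Theorem \ref{thm1.1}, and that reproof is incomplete where it matters most.
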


\begin{proof}
	By Lemma \ref{lemperiodic}, $\xi^2\cdot c_1(X)=\xi\cdot c_1(X)^2=0$. 
Since  $\xi$ is  K\"ahler,  $c_1(X)=0$ (cf.~Lemma \ref{hodgeriemann}). 
Then, it follows from the abundance that $K_X\sim_{\mathbb{Q}} 0$ (cf.~\cite[Theorem 1.1]{campana2016abundance}). 
	Let $m$ be the minimal integer such that $m K_X\sim 0$. Taking the associated global index-one cover   $f:X':=\textbf{Spec}\oplus_{i=0}^{m-1}\mathcal{O}_X(-iK_X)\rightarrow X$, we can lift $G$ to $X'$ by their actions on $K_X$.

Now, $K_{X'}\sim 0$ and $X$ has only Gorenstein terminal singularities (cf.~\cite[Proposition 5.20]{kollar2008birational}).
	Let $\sigma:\widetilde{X}\rightarrow X'$ be a $G$-equivariant resolution minimal in codimension two (cf.~\cite[Definition 2.1 and Proposition/Definition 5.3]{graf2019finite}), and $c_2(X')$ the ``Birational'' second Chern class, which is defined as an element $c_2(X')\in \textup{H}^2(X',\mathbb{R})^\vee$ such that
	$$c_2(X')\cdot a:=\int_{\widetilde{X}}c_2(\widetilde{X})\cup \sigma^*(a),$$
for any $a\in \textup{H}^2(X',\mathbb{R})$. 
Here, $c_2(\widetilde{X})$ is the usual second Chern class of  $\widetilde{X}$. 

Since $f$ is finite, $\xi':=f^*\xi$ is a K\"ahler class on $X'$ (cf.~\cite[Proposition 3.5]{graf2019finite}).
We aim to show that $\xi'\cdot c_2(X')=0$. 
Indeed, for each summand $\xi_i':=f^*\xi_i$ of $\xi'$, we  choose $g_i\in G$ such that $g_i^*\xi_i'=\alpha_i\xi_i'$ with $\alpha_i>1$ (cf.~\cite[Lemma 3.7]{zhang2016ndimension}). 
Further, by \cite[Lemma 5.6]{graf2019finite}, $c_2(X')\cdot \xi_i'=0$ and thus $c_2(X')\cdot \xi'=0$. 
Applying \cite[Corollary 1.2]{graf2019finite}, we have $X\cong T/F$, where $T$ is a  complex torus and $F$ is a finite group whose action on $T$ is free outside a finite subset of $T$. 
Finally, according to \cite[\S 2.14]{zhang2013algebraic}, $G$ lifts to $G_T\subseteq \textup{Aut}(T)$.
\end{proof}


Now we come back to the proof of Proposition \ref{thm1.2}. 
First, we recall the following useful  result which is a special case of \cite[Lemma 2.10]{zhang2009theorem}. 
It will be crucially applied later on.

\begin{lem}[cf.~\cite{zhang2009theorem}]\label{lem-zhang-2-10}
Suppose that $(X,G)$ satisfies \textbf{Hyp\,(A)}.
Then there does not exist any $G$-equivariant surjective holomorphic map $\pi:X\to Y$ such that $\dim X>\dim Y>0$ and $G$ descends to a biregular action on $Y$.
\end{lem}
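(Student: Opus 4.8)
The plan is to imitate the proof of \cite[Lemma 2.10]{zhang2009theorem}, whose core is the additivity of dynamical ranks along an equivariant fibration, and to check that each ingredient is still available for the singular compact K\"ahler threefold of \textbf{Hyp\,(A)}. Suppose, for contradiction, that a $G$-equivariant surjective holomorphic map $\pi\colon X\to Y$ as in the statement exists, so that $\dim Y\in\{1,2\}$; write $\rho\colon G\to\textup{Aut}(Y)$ for the given descent.

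First I would put $\pi$ in standard form. Since $X$ is normal, $\pi$ factors through the normalization of $Y$, which carries the induced $G$-action; replacing $Y$ by this normalization and $\pi$ by the resulting Stein factorization---both constructions canonical, hence $G$-equivariant, and both leaving $\dim Y$ unchanged---I may assume that $Y$ is a normal compact complex space and that $\pi$ has connected fibres. Moreover $Y$, being dominated by the compact K\"ahler manifold obtained from a $G$-equivariant resolution $\widetilde X\to X$ (cf.\ \cite{wlodarczk2009resolution}), is a compact K\"ahler space. As $X$ has only terminal---hence isolated---singularities, a very general fibre $F$ of $\pi$ avoids $\textup{Sing}(X)$ and is a smooth compact K\"ahler manifold of dimension $3-\dim Y$. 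Next I would invoke the Dinh--Sibony structure theorem for commutative groups of automorphisms \cite{dinh2004groupes} twice. Applied to $\rho(G)\subseteq\textup{Aut}(Y)$ it gives a subgroup $N_2\leq G$ with $\textup{rank}(G/N_2)\leq\dim Y-1$ and $d_1(\rho(g))=1$ for all $g\in N_2$. Applied in its fibred form to the $G$-equivariant fibration $\pi$---which is the content I borrow from \cite[Lemma 2.10]{zhang2009theorem}---it gives a subgroup $N_1\leq G$ with $\textup{rank}(G/N_1)\leq(3-\dim Y)-1$ and relative first dynamical degree $d_1(g\,|\,\pi)=1$ for all $g\in N_1$.

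The two coranks add up to $(\dim Y-1)+(2-\dim Y)=1<2=\textup{rank}\,G$, so $N_1\cap N_2$ has positive rank and hence contains a non-trivial element $g$. For this $g$ the product formula for dynamical degrees along $\pi$ gives $d_1(g)=\max\{d_1(\rho(g)),\,d_1(g\,|\,\pi)\}=1$, contradicting the assumption in \textbf{Hyp\,(A)} that every non-trivial element of $G$ has positive entropy, i.e.\ $d_1>1$; this contradiction proves the lemma. The numerical heart of the argument is thus merely the borderline inequality $(\dim Y-1)+(\dim F-1)=n-2<n-1=\textup{rank}\,G$. Accordingly, I expect the main obstacle to be not the rank bookkeeping but the transcription of the relative Dinh--Sibony theorem and of the product formula for dynamical degrees from the smooth (or projective) setting to our singular $X$; I would handle this by passing to the $G$-equivariant resolution $\widetilde X$---on which $G$ acts a priori only bimeromorphically---and invoking the bimeromorphic invariance of first (and relative) dynamical degrees and of the positive/null-entropy dichotomy recorded in Remark \ref{cm_rmk_bimero_invariant}, together with the K\"ahlerness of $Y$ obtained above.
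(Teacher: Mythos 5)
The paper does not actually reprove this statement: it is quoted verbatim as a special case of \cite[Lemma 2.10]{zhang2009theorem}, the (implicit) reduction from the singular $X$ of \textbf{Hyp\,(A)} to the smooth case being a $G$-equivariant resolution $\widetilde{X}\to X$ (cf.~\cite{wlodarczk2009resolution}), on which $G$ still acts biregularly and with positive entropy by Remark \ref{cm_rmk_bimero_invariant}, so that $\widetilde{X}\to X\to Y$ is again an equivariant fibration and Zhang's lemma for compact K\"ahler manifolds applies. Your proposal instead reconstructs the argument behind that lemma: Dinh--Sibony on the base, a relative estimate along the fibres, the rank count $(\dim Y-1)+(\dim F-1)=1<2=\operatorname{rank}G$, and the product formula $d_1(g)=\max\{d_1(g|_Y),d_1(g\,|\,\pi)\}$ of \cite{dinh2012on}. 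The skeleton and the bookkeeping are correct, and your final reduction to the smooth model via Remark \ref{cm_rmk_bimero_invariant} is exactly the reduction the paper has in mind.

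The genuine weak point is the provenance of $N_1$. You justify the ``fibred form'' of Dinh--Sibony --- a subgroup $N_1\le G$ with $\operatorname{rank}(G/N_1)\le\dim F-1$ and $d_1(g\,|\,\pi)=1$ on $N_1$ --- by citing \cite[Lemma 2.10]{zhang2009theorem} itself; but that lemma is precisely the statement under proof, so as written your argument is circular at its only nontrivial step. Note where the weight really falls: if $\dim Y=2$ the fibres are curves, so $d_1(g\,|\,\pi)=1$ automatically and only the base estimate (corank $\le 1$) is needed; if $\dim Y=1$ the base is a curve, so $d_1(g|_Y)=1$ automatically and everything rests on the relative estimate over the curve. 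In that case you cannot obtain it by restricting to a very general fibre, because elements of $G$ need not preserve any fibre --- the kernel of $G\to\textup{Aut}(Y)$ can be trivial (e.g.\ a rank-two abelian subgroup of $\textup{PGL}_2$ when $Y\cong\mathbb{P}^1$) --- so ordinary Dinh--Sibony on $F$ does not apply to all of $G$. You need either a genuine relative Dinh--Sibony-type statement (arguments of this kind appear in \cite{dinh2015compact}, combined with \cite{dinh2012on}), or simply to do what the paper does and invoke \cite[Lemma 2.10]{zhang2009theorem} as a black box after passing to the equivariant resolution.
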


As an application of Lemma \ref{lem-zhang-2-10}, we are able to show the  proposition below.
\begin{pro}\label{prop_kappa<=0}
Suppose that $(X,G)$ satisfies \textbf{Hyp\,(A)}.
Then the Kodaira dimension  $\kappa(X)\le 0$.
In particular, either $\kappa(X)=0$ or $X$ is uniruled.
\end{pro}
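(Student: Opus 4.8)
The plan is to run the Iitaka fibration against Lemma \ref{lem-zhang-2-10}. I argue by contradiction and suppose $\kappa(X)\ge 1$. First, choose $m\ge 1$ sufficiently divisible so that the pluricanonical map $\phi:=\phi_{|mK_X|}\colon X\dashrightarrow Z:=\overline{\phi(X)}\subseteq\mathbb P^N$ has $\dim Z=\kappa(X)$, where $\mathbb P^N=\mathbb P\big(H^0(X,\mathcal O_X(mK_X))^\vee\big)$. Here the key point is that, since $G\subseteq\textup{Aut}(X)$ consists of honest automorphisms, it acts \emph{linearly} on $H^0(X,\mathcal O_X(mK_X))$ (via pull-back of reflexive pluricanonical forms), hence biregularly on $\mathbb P^N$; thus $\phi$ is $G$-equivariant, the image $Z$ is $G$-invariant, and $G$ descends to a biregular action on $Z$. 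If $\kappa(X)=3$, then $X$ is of general type, so $\textup{Bim}(X)$ is finite (it is the automorphism group of the canonically polarised canonical model), contradicting $G\cong\mathbb Z^2\subseteq\textup{Aut}(X)\subseteq\textup{Bim}(X)$. Hence $\kappa(X)\in\{1,2\}$ and $0<\dim Z<3$.

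Next I would take a $G$-equivariant resolution $\mu\colon\widehat X\to X$ of the indeterminacy locus of $\phi$ (cf.~\cite[Theorem 2.0.1]{wlodarczk2009resolution}), so that $p:=\phi\circ\mu\colon\widehat X\to Z$ becomes a $G$-equivariant surjective holomorphic map, with $G$ acting biregularly on $\widehat X$ (the lifted action) and on $Z$ as above. It remains to check that $(\widehat X,G)$ again satisfies \textbf{Hyp\,(A)}: indeed $\widehat X$ is a smooth compact K\"ahler threefold, hence $\mathbb Q$-factorial with at worst terminal singularities; $G$ lifts into $\textup{Aut}(\widehat X)$; and for every non-trivial $g\in G$, Remark \ref{cm_rmk_bimero_invariant} gives $\rho(\widehat g^{\,*}|_{H^{1,1}(\widehat X)})=d_1(\widehat g)=d_1(g)>1$, so $\widehat g$ is of positive entropy. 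Since $\dim\widehat X=3>\dim Z>0$ and $G$ descends to a biregular action on $Z$, this contradicts Lemma \ref{lem-zhang-2-10}; therefore $\kappa(X)\le 0$.

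For the last assertion, suppose $\kappa(X)=-\infty$. If $K_X$ were pseudo-effective, the non-vanishing theorem for K\"ahler threefolds with terminal singularities (cf.~\cite{campana2016abundance}) would force $\kappa(X)\ge 0$, a contradiction; hence $K_X$ is not pseudo-effective, and then $X$ is uniruled by the uniruledness criterion for K\"ahler threefolds (cf.~\cite{horing2016minimal}). Thus either $\kappa(X)=0$ or $X$ is uniruled.

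I expect the main obstacle to be realising the Iitaka fibration as an honest $G$-equivariant \emph{morphism} whose base carries a \emph{biregular} $G$-action (a priori $G$ acts on the Iitaka base only bimeromorphically). The resolution is to exploit that $G$ acts biregularly on $X$ --- so linearly on pluricanonical sections and therefore biregularly on the projective image $Z$ --- after which replacing $X$ by a $G$-equivariant resolution $\widehat X$ costs nothing, since Kodaira dimension, dynamical degrees and positive entropy are all bimeromorphic invariants.
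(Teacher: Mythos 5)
Your proof is correct and takes essentially the same route as the paper: exclude $\kappa(X)=3$ by finiteness of $\textup{Bim}$ for general type, turn the Iitaka (pluricanonical) fibration into a $G$-equivariant holomorphic map onto a positive- but lower-dimensional base after a $G$-equivariant resolution (positive entropy being preserved by Remark \ref{cm_rmk_bimero_invariant}), contradict Lemma \ref{lem-zhang-2-10}, and then settle the last clause by the pseudo-effectivity dichotomy via \cite{horing2016minimal}, \cite{campana2016abundance} and the uniruledness criterion. The only real difference is presentational: you make the equivariance explicit through the linear $G$-action on pluricanonical sections, which the paper leaves implicit.
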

\begin{proof}
Since $\textup{Bim}(X)$ is an infinite group, 
 $\kappa(X)<3$ (cf.~\cite[Corollary 14.3]{ueno2006classification}). 
Suppose that $1\le\kappa(X)<3$. 
Then, there exists an Iitaka fibration $\varphi:X\dashrightarrow \mathbb{P}^n$ such that $\dim \textup{Im}\,\varphi=\kappa(X)$. Taking a $G$-equivariant resolution $\widetilde{X}\to X$ (cf.~\cite[Section 1.4]{nakayama2009building} and \cite[Theorem 2.0.1]{wlodarczk2009resolution}), we may assume that $X$ is a compact K\"ahler smooth threefold. 
Besides, resolving the indeterminacy locus of $\varphi$, we can further assume that $\varphi$ is holomorphic (cf.~\cite[Theorem A and the remark therein]{nakayama2009building}). 
Then, we get a non-trivial $G$-equivariant fibration $X\rightarrow \textup{Im}\,\varphi$ such that $\dim\textup{Im}\,\varphi<3$, a contradiction to Lemma \ref{lem-zhang-2-10}. Therefore, under the assumption of \textbf{Hyp\,(A)}, $\kappa(X)\le 0$. 	

If $K_X$ is pseudo-effective, then applying \cite[Theorem 1.1]{horing2016minimal} and \cite[Theorem 1.1]{campana2016abundance}, we have $\kappa(X)=0$.
If $K_X$ is not pseudo-effective, then $X$ is uniruled (cf.~\cite{boucksom2004pseudo}).
\end{proof}


\begin{lem}\label{smallcase1}
Suppose  $(X,G)$ satisfies \textbf{Hyp\,(A)} and  $K_X\equiv 0$. 
Then $X$ is a $Q$-torus, i.e., a quasi-\'etale quotient of a torus $T$.
Moreover, the group $G$ lifts to $G_T\subseteq\textup{Aut}(T)$.
\end{lem}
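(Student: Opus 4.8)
The plan is to reduce to Proposition \ref{pro2.8} by showing that the common nef eigenvector $\xi$ can be taken K\"ahler (or at least that the arguments behind Proposition \ref{pro2.8} go through). Since $K_X\equiv 0$, abundance for K\"ahler threefolds (cf.~\cite[Theorem 1.1]{campana2016abundance}) gives $K_X\sim_{\mathbb{Q}}0$, so $X$ has canonical singularities with torsion canonical class and Proposition \ref{keyproposition} applies. First I would run the Albanese-closure machinery: by Proposition \ref{pro1} there is a quasi-\'etale Galois cover $\tau\colon\widetilde X\to X$ with $q(\widetilde X)=q^\circ(X)$, and by \cite[\S 2.14]{zhang2013algebraic} (lifting via the action on $K_X$ and on the cover) the group $G$ lifts to a group $\widetilde G\subseteq\textup{Bim}(\widetilde X)$ of the same rank $2$. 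The first technical point is to check that the lifted transformations are \emph{biholomorphic}; this is exactly the content of the argument sketched in Remark \ref{reamrk1.3} and will presumably be isolated as a separate lemma (the ``splitting lemma''): a bimeromorphic self-map of a minimal terminal threefold that lifts from a quasi-\'etale base is an isomorphism in codimension one by Theorem \ref{lemisomorphic}, and one then uses the structure of the fibres (minimal surfaces / tori) to rule out the exceptional behaviour, so that $\widetilde G\subseteq\textup{Aut}(\widetilde X)$.

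Next I would split into cases according to $q^\circ(X)\in\{0,1,3\}$ (the value $2$ is excluded by Proposition \ref{keyproposition}(1)). If $q^\circ(X)=3$, then Proposition \ref{keyproposition}(2) already says $X$ is a quasi-\'etale quotient of a torus $T$, and lifting $G$ to $\textup{Aut}(T)$ is again \cite[\S 2.14]{zhang2013algebraic}, so we are done. If $q^\circ(X)=0$ (the weak Calabi--Yau case), then $X$ is projective by Proposition \ref{keyproposition}(3), and after replacing $X$ by its global index-one cover and a further $G$-equivariant resolution minimal in codimension two one would like to apply the same reasoning as in Proposition \ref{pro2.8}: $\xi$ is K\"ahler on $X$ (not just nef), hence $c_1=0$ forces $\xi\cdot c_2=0$ via \cite[Lemma 5.6]{graf2019finite} using the eigenvalue $\alpha_i>1$ on each summand $\xi_i$, and then \cite[Corollary 1.2]{graf2019finite} gives that $X$ is a quotient of a torus. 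But a genuine Calabi--Yau threefold cannot be such a quotient (it has $q^\circ=0$ while a torus quotient has $q^\circ=3$), so this case should in fact be \emph{vacuous}; the real work is to derive a contradiction, and I expect this to come from Lemma \ref{lem-zhang-2-10} applied to some $G$-equivariant fibration one can produce on a (resolution of a) Calabi--Yau threefold with $\textup{Bim}$ of large rank, or from the non-existence of the required positive-entropy $\mathbb{Z}^2$-action on such a threefold.

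If $q^\circ(X)=1$, Proposition \ref{keyproposition}(4) produces a quasi-\'etale cover $E\times F\to X$ with $E$ an elliptic curve and $F$ a canonical surface whose minimal resolution is a K3 surface. I would lift (a finite-index subgroup of) $G$ to $\textup{Aut}(E\times F)$ using the splitting lemma above, and then analyze the induced action on the two factors. The projection $E\times F\to E$ is $G$-equivariant with positive-dimensional base, and since a finite-index subgroup of $G$ acts biregularly on $E$ (the Albanese factor), this contradicts Lemma \ref{lem-zhang-2-10} unless the action on $E$ is trivial; but if the action on $E$ is trivial then all the dynamics lives on $F$, forcing a finite-index subgroup of $G\cong\mathbb{Z}^2$ to embed in $\textup{Aut}(F)$ with all non-trivial elements of positive entropy, which is impossible since a K3 surface has dynamical rank $\le\dim-1=1$ by \cite{dinh2004groupes}. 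Either way we get a contradiction, so $q^\circ(X)=1$ is also vacuous.

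\textbf{Main obstacle.} The delicate step is the splitting lemma — verifying that the bimeromorphic transformations of the (minimal) threefold lifted to its quasi-\'etale covers are honest automorphisms; this requires Theorem \ref{lemisomorphic} together with a careful use of the minimality of the surface fibres (for $q^\circ=1$) and of the torus/Calabi--Yau structure, and it is precisely the point where the K\"ahler (non-projective) setting is harder than the projective one treated in \cite{zhang2016ndimension}. Once biholomorphicity is in hand, the remaining cases are dispatched either by citing Proposition \ref{keyproposition} directly ($q^\circ=3$) or by a dimension/entropy contradiction via Lemma \ref{lem-zhang-2-10} and \cite{dinh2004groupes} ($q^\circ=0$ and $q^\circ=1$), and the final lifting of $G$ to $\textup{Aut}(T)$ is the standard \cite[\S 2.14]{zhang2013algebraic}.
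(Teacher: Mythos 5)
Your overall architecture is the paper's: reduce by the global index-one cover and the Albanese closure, split into the cases $q^\circ(X)\in\{0,1,3\}$ via Proposition \ref{keyproposition}, kill the case $q^\circ=1$ with Lemma \ref{lem-zhang-2-10} and the Dinh--Sibony rank bound, and lift $G$ to the torus by \cite[\S 2.14]{zhang2013algebraic}. Two remarks on the parts that do work: since $G$ acts biregularly on $X$ and the index-one cover and Albanese closure are canonical, the lifted maps are automatically biholomorphic, so the Theorem \ref{lemisomorphic}/Lemma \ref{splitting lemma} machinery you flag as the main obstacle is not needed in this lemma (it matters in Theorem \ref{thm1.1}, where the action on $X_{\textup{min}}$ is only bimeromorphic); and in the case $q^\circ=1$ the lifting of $G$ to (a finite-index subgroup acting on) $F\times E_2$ is not given by the splitting lemma but by the bundle structure of Theorem \ref{thm-graf-thm1.10}, lifting along a multiplication isogeny \cite[Lemma 4.9]{nakayama2009building}, and \cite[Lemma 2.4]{zhang2013algebraic} to recover \textbf{Hyp\,(A)}; also your caveat ``unless the action on $E$ is trivial'' is unnecessary, since Lemma \ref{lem-zhang-2-10} excludes any equivariant fibration with positive-dimensional smaller-dimensional base on which $G$ descends biregularly, trivially or not.

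The genuine gap is the case $q^\circ(X)=0$. You assert that $\xi$ is K\"ahler ``(not just nef)'', which nothing in \textbf{Hyp\,(A)} provides — this is exactly why Proposition \ref{pro2.8} carries it as an additional hypothesis — and you then concede that the needed contradiction is still missing, suggesting either a $G$-equivariant fibration on a (weak) Calabi--Yau threefold, which does not exist in general, or ``the non-existence of the required positive-entropy $\mathbb{Z}^2$-action on such a threefold'', which is precisely the statement to be proved, not a tool one can quote from \cite{dinh2004groupes}. The paper's argument here is short but essential: by Proposition \ref{keyproposition}(3) such an $X$ is projective, it is minimal with $K_X\sim_{\mathbb{Q}}0$ (hence not uniruled), so the projective theorem \cite[Theorem 1.1]{zhang2016ndimension} forces $X$ to be a $Q$-torus; a $Q$-torus has $q^\circ=3$, contradicting $q^\circ(X)=0$. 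Without invoking that projective result (or reproving it), your treatment of the case $q^\circ(X)=0$ does not close, so the proposal as written is incomplete at this step.
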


\begin{proof}
By abundance (cf.~\cite[Theorem 1.1]{campana2016abundance}), $K_X\sim_{\mathbb{Q}} 0$. Taking the global index-one cover $\widetilde{X}\rightarrow X$, we see that $G$ lifts to $\widetilde{X}$ by their actions on $K_X$. 
We denote by $G|_{\widetilde{X}}$ the group to which $G$ lifts, hence there exists a surjective group homomorphism $G|_{\widetilde{X}}\to G$ with the kernel being the Galois group $\textup{Gal}(\widetilde{X}/X)$ (cf. \cite[\S 2.15]{zhang2013algebraic}).
By \cite[Lemma 2.4]{zhang2013algebraic}, there exists a finite index subgroup $G_0$ of $G|_{\widetilde{X}}$ such that $(\widetilde{X},G_0)$ satisfies \textbf{Hyp\,(A)}.
Then, under the Albanese closure map  $\widetilde{X}'\rightarrow\widetilde{X}$ in Proposition \ref{pro1}, $G|_{\widetilde{X}}$ lifts to $\widetilde{X}'$ by the uniqueness. 
By \cite[Lemma 2.4 and \S 2.15]{zhang2013algebraic} again,  there exists a finite index subgroup $G_1$ of $G|_{\widetilde{X}'}$ such that $(\widetilde{X}',G_1)$ satisfies \textbf{Hyp\,(A)}.

From now on, we assume that $q^\circ(X)=q(X)$ and $K_X\sim 0$. If $q(X)=0$, then by Proposition \ref{keyproposition} (3), $X$ is projective and thus a $Q$-torus (cf.~\cite[Theorem 1.1]{zhang2016ndimension}), a contradiction to the equality $q(X)=q^\circ(X)$. 
So we may assume that $q^\circ(X)=q(X)>0$. Let $T:=\textup{Alb}(X)$ and $\alpha:X\rightarrow T$ the Albanese map. 
Thanks to Theorem \ref{thm-graf-thm1.10}, $\alpha$ is surjective holomorphic with connected fibres, and there exists an \'etale cover $T_1\rightarrow T$ such that $X\times_T T_1\cong F\times T_1$ for a connected fibre $F$ of $\alpha$ such that $\kappa(F)=q^\circ(F)=0$. So $\dim F=0$ or $2$. 

If $\dim F=0$,  then $\alpha$ is  bimeromorphic. 
So we have $0\equiv K_X=\alpha^*K_T+E\equiv E$, 
where the support of $E$ equals the exceptional locus of $\alpha$. Since $T$ is smooth, our $E\ge 0$. 
Hence, the above equation gives us $E=0$ and thus $X\cong T$.

If $\dim F=2$, then $T$ is an elliptic curve and thus projective.  Also, $G\subseteq\textup{Aut}(X)$ descends to a well-defined automorphism group $G_T$ of $T$. 
Moreover, $T_1\rightarrow T$ is an isogeny and then with $T_1$ replaced by a further isogeny $T_2\rightarrow T_1$, we may assume that $\theta:T_2\rightarrow T$ is a multiplication map. By \cite[Lemma 4.9]{nakayama2009building}, $G_{T}$ lifts to an automorphism group $G_{T_2}$ of $T_2$. 
See the following commutative diagram:
\[\xymatrix{
F\times T_2\ar[r]\ar[d]&T_2\ar[d]\\
F\times T_1\ar[r]\ar[d]&T_1\ar[d]\\
X\ar[r]&T
}\]
For each $g\in G$ on $X$, we get an induced automorphism $g|_T\in G_T$ and then $g|_T$ lifts to $g'\in\textup{Aut}(T_2)$ (which is not unique) on $T_2$. Here, there are $\deg\theta$  of choices for the lifting of  $g_T$ to $\textup{Aut}(T_2)$ (cf.~\cite[Proof of Lemma 4.9]{nakayama2009building}).

 Since $F\times T_2\cong X\times_T T_2\subseteq X\times T_2$ with respect to $\alpha:X\rightarrow T$ and $\theta: T_2\rightarrow T$, we restrict $g\times g':X\times T_2\rightarrow X\times T_2$ to $F\times T_2$, for $g\in G$ and $g'\in G_{T_2}$ such that $\alpha\circ g=g_T\circ\alpha$ and $g_T\circ\theta=\theta\circ g'$. 
 Then, we get an induced subgroup $G_{F\times T_2}\subseteq\textup{Aut}(F\times T_2)$  (consisting of all liftings of $G$) and there is a natural surjective group homomorphism $G_{F\times T_2}\rightarrow G$.
By \cite[Lemma 2.4]{zhang2013algebraic}, there exists a finite index subgroup $G_0\subseteq G_{F\times T_2}$ such that  $(F\times T_2,G_0)$ satisfies \textbf{Hyp\,(A)}. 
So we get a non-trivial $G_0$-equivariant fibration $F\times T_2\rightarrow \textup{Alb}(F\times T_2)=:T_2$, a contradiction to Lemma \ref{lem-zhang-2-10}. 
Thus, $X$ is a $Q$-torus. 
\end{proof}

\begin{proof}[Proof of Proposition \ref{thm1.2}] 
By Proposition \ref{prop_kappa<=0}, we have $\kappa(X)=0$.
Hence, $K_X\equiv 0$ (cf.~\cite[Theorem 1.1]{campana2016abundance}), and our proposition follows from  Lemma \ref{smallcase1}. 
\end{proof}

\section{The proof of Theorem \ref{thm1.1}}\label{section3}
In this section, we  prove our main result.
We begin with the lemma below, which states the splitting property for  specific  bimeromorphic transformations on product spaces. 
\setlength\parskip{0pt}
\begin{lem}\label{splitting lemma}
Let $T$ be a complex torus and $F$ a  (minimal) K3 surface with $q^\circ(F)=0$. 
Then every  bimeromorphic transformation $g:F\times T\dashrightarrow F\times T$ is an automorphism and splits into the form $(\sigma,\tau)$ such that $\sigma\in\textup{Aut}(F)$ and $\tau\in\textup{Aut}(T)$.
\end{lem}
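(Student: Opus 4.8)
The plan is to read the splitting off the Albanese fibration of $F\times T$, reducing the whole statement to two classical facts about a K3 surface: a bimeromorphic map between K3 surfaces is biholomorphic, and $H^0(F,T_F)=0$, so $\textup{Aut}(F)$ is discrete.

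First I would note that $F\times T$ is a smooth compact K\"ahler manifold with $\textup{Alb}(F\times T)=T$ (since $q(F)=0$ for a K3 surface), the Albanese morphism being, up to a translation, the projection $a:=\textup{pr}_T\colon F\times T\to T$, whose fibre over each $t\in T$ is $F\times\{t\}\cong F$. As the Albanese torus together with its universal morphism is a bimeromorphic invariant of smooth compact K\"ahler manifolds, the given $g\in\textup{Bim}(F\times T)$ induces a biholomorphism $\tau\in\textup{Aut}(T)$ with $a\circ g=\tau\circ a$ as meromorphic maps; applying the same to $g^{-1}$, one sees that for every $t$ outside a proper analytic subset $S\subseteq T$ (enlarged so as to treat $g^{-1}$ as well) the map $g$ restricts to a bimeromorphic map $F\times\{t\}\dashrightarrow F\times\{\tau(t)\}$ of K3 surfaces, hence to an isomorphism. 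Under the identifications $F\times\{t\}\cong F\cong F\times\{\tau(t)\}$ this isomorphism is some $\sigma_t\in\textup{Aut}(F)$, so $g(x,t)=(\sigma_t(x),\tau(t))$ for all $x\in F$ and all $t\in T\setminus S$.

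Next I would prove that $\sigma_t$ is independent of $t$. The key point is that $t\mapsto\sigma_t$ varies analytically: it classifies the analytic family of graphs $\Gamma_{\sigma_t}\subseteq F\times F$, and each $[\Gamma_{\sigma_t}]$ is a rigid (isolated) point of the Douady space of $F\times F$ because $N_{\Gamma_{\sigma_t}/F\times F}\cong T_F$ and $H^0(F,T_F)=0$. Hence $\textup{Aut}(F)$ is discrete while $T\setminus S$ is connected (the complement of a proper analytic subset of a connected manifold), so $\sigma_t=\sigma$ is constant. Then $(\sigma,\tau)\colon F\times T\to F\times T$ is a biholomorphism that agrees with $g$ on the dense open subset $F\times(T\setminus S)$, so $g=(\sigma,\tau)$; this is exactly the desired conclusion, yielding at once that $g$ is an automorphism and that it splits into the stated product form.

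I expect the crux to be the interplay of the two K3 inputs above, namely that bimeromorphic K3 surfaces are isomorphic (which removes the fibrewise indeterminacy of $g$) and the discreteness of $\textup{Aut}(F)$ (which prevents $\sigma_t$ from moving with $t$); by contrast the final passage to an equality of bimeromorphic maps is only the identity principle. For comparison I would also remark that Theorem \ref{lemisomorphic}, applied to $F\times T$ when $\dim T=1$, gives merely that $g$ is an isomorphism in codimension one, which is strictly weaker than what is needed here.
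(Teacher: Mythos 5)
Your proposal is correct and follows essentially the same route as the paper's proof: descend $g$ along the Albanese map $F\times T\to T$ to an automorphism $\tau$ of $T$, note that the fibrewise maps $\sigma_t$ are automorphisms since $F$ is a minimal surface (so $\textup{Bim}(F)=\textup{Aut}(F)$), and use $h^0(F,T_F)=0$ to see that $\textup{Aut}(F)$ is discrete, forcing $\sigma_t$ to be constant. Your added justifications (rigidity of graphs in the Douady space and the identity principle for the final equality $g=(\sigma,\tau)$) merely spell out steps the paper leaves implicit.
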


\begin{proof}
First, we consider the following commutative diagram \[\xymatrix{F\times T\ar@{-->}[r]^g\ar[d]_{\alpha}&F\times T\ar[d]^{\alpha}\\
T\ar[r]_{\tau}&T
}\]
which is induced by the Albanese map $\alpha:F\times T\rightarrow \textup{Alb}(F\times T)=T$ (cf.~\cite[Corollary 2.11]{nakayama2009building}) 
with the descended  $\tau:T\rightarrow T$ being an automorphism (cf.~\cite[Lemma 9.11]{ueno2006classification}).
 By the diagram, for a general point $t\in T$, there exists a bimeromorphic map $\sigma_t:F\dashrightarrow F$. In other words, there exists a meromorphic map $\chi:T\dashrightarrow \textup{Bim}(F)$. Since $F$ is a minimal smooth surface, $\textup{Bim}(F)=\textup{Aut}(F)$. Further, by the property of K3 surfaces, $h^0(F,T_F)=h^0(F,\Omega_F^1)=q(F)=0$ (where $T_F\cong\Omega_F^1\otimes(\Omega_F^2)^\vee\cong\Omega_F^1$ is the tangent sheaf). So $\textup{Bim}(F)=\textup{Aut}(F)$ is discrete. As a result, $\chi$ is  constant, i.e., the automorphism $\sigma_t$ is independent of the choice of $t\in T$, which completes our proof. 
\end{proof}


\setlength\parskip{4pt}
\par \vskip 0.3pc \noindent
\textbf{Proof of Theorem \ref{thm1.1}:}\setlength\parskip{0pt}
First, we note that $\kappa(X)\le 0$ (cf.~Proposition \ref{prop_kappa<=0}).

(1). Since  $X$ has at worst terminal singularities and $K_X$ is not pseudo-effective, our $X$ is uniruled. 
Taking a $G$-equivariant resolution $\widetilde{X}\rightarrow X$ (cf.~\cite[Theorem 2.0.1]{wlodarczk2009resolution}), we may assume that $X$ is smooth. 
Let $\psi:X\dashrightarrow C(X)$ be the meromorphic map to the cycle space of $X$ which defines the maximal rationally connected fibration of $X$, sending a general point $x\in X$ to the maximal rationally connected subvariety containing $x$.
Since each $g\in G$ is an automorphism and hence proper, it follows from  \cite[Chapter IV \S2 Th\'eor\`eme 6]{barlet1975easpace} that it descends to a self-morphism $g_c$ on $C(X)$ by the push-forward operation $g_*$. 
Since $g$ is an automorphism, $g_*(M)$ is reduced for every rationally connected submanifold $M$, hence  $\psi\circ g=g_c\circ\psi$. 

Let $X\dashrightarrow Z\to C(X)$ be the Stein factorization of $\psi$  with $Z\to C(X)$ being a finite morphism (cf.~\cite[Definition 3.2]{nakayama2009building}). 
Since $\psi\circ g=g_c\circ\psi$,  the (irreducible) graph $\Gamma_{xc}$ of the meromorphic map $X\dashrightarrow C(X)$ is  $(g\times g_c)$-invariant.
Hence, we get an endomorphism $g_{\Gamma_{xc}}:=(g\times g_c)|_{\Gamma_{xc}}$.
Let $\widetilde{\Gamma_{xc}}$ be the normalization of $\Gamma_{xc}$. 
Then $g_{\Gamma_{xc}}$ lifts to $g|_{\widetilde{\Gamma_{xc}}}$, and $Z\to C(X)$ is given by the usual Stein factorization of $p_2:\widetilde{\Gamma_{xc}}\xrightarrow{\phi_1} Z\xrightarrow{\phi_2} C(X)$ (cf.~\cite[Remark after Definition 3.2]{nakayama2009building}).
Since   $p_2\circ g|_{\widetilde{\Gamma_{xc}}}=g_c\circ p_2$,
 for every (connected) fibre $F$ of $\phi_1$, the image $g|_{\widetilde{\Gamma_{xc}}}(F)$ is contracted by $p_2$, and hence $\phi_1(g|_{\widetilde{\Gamma_{xc}}}(F))$ is a point.
By the rigidity lemma (cf.~\cite[Lemma 4.1.13]{beltrametti1995the}), $\phi_1\circ g|_{\widetilde{\Gamma_{xc}}}$ factors  through $\phi_1$, i.e., $g|_{\widetilde{\Gamma_{xc}}}$ (and hence $g$) descends to an automorphism $g_z$ on $Z$.
Furthermore,  since general maximal rationally connected subvariety of $X$ is contracted along $\psi$, our $Z$ is not uniruled. 

Denote by $\Gamma_{xz}\subseteq X\times Z$ the graph of the almost holomorphic fibration $X\dashrightarrow Z$.
Then we get an endomorphism $g_{\Gamma_{xz}}:=(g\times g_z)|_{\Gamma_{xz}}$, noting that the graph $\Gamma_{xz}$ is $(g\times g_z)$-invariant. 
Taking the normalization $X'$ of $\Gamma$, we see that there is an induced endomorphism $g'$ on $X'$.
Let $p:X'\to X$ be the induced projection.
Then  $g\circ p=p\circ g'$ and we can lift $g$ to an automorphism $g'$ on $X'$.
Hence, there is an induced automorphism group $G|_{X'}$ of rank $2$ on $X'$.
Replacing $X'$ by a $G$-equivariant resolution (cf.~\cite[Section 1.4]{nakayama2009building} and \cite[Theorem 2.0.1]{wlodarczk2009resolution}) if necessary, we may assume that $X'$ is smooth.
Now, we get a $G$-equivariant fibration $X'\to Z$.
By Lemma \ref{lem-zhang-2-10}, $Z$ can only be a single point, which implies that $X$ is rationally connected.
So Theorem \ref{thm1.1} (1) is proved.

(2). Since $X$ has at worst terminal singularities and $K_X$ is pseudo-effective, by \cite[Theorem 1.1]{horing2016minimal}, there exists a minimal model $X_{\textup{min}}$ of $X$, i.e., there is a bimeromorphic map which is a composite of divisorial contractions and flips:
$X\dashrightarrow X_{\textup{min}}$ such that $K_{X_\textup{min}}$ is nef. 
 Then, the automorphism group $G$ descends to a bimeromorphic transformation subgroup $G|_{X_{\textup{min}}}\subseteq\textup{Bim}(X_{\textup{min}})$ of rank $2$. 
By Proposition \ref{prop_kappa<=0}, $\kappa(X)=\kappa(X_{\textup{min}})=0$.
Thus, $K_{X_{\textup{min}}}\sim_{\mathbb{Q}} 0$ by the abundance (cf.~\cite[Theorem 1.1]{campana2016abundance}).

In the following, we aim to construct a quasi-\'etale splitting cover: $\widetilde{X_{\textup{min}}}:=F\times T\rightarrow X_{\textup{min}}$ 
  such that $F$ is a weak Calabi-Yau space (and thus $\dim F=0$ or $2$), $T$ is a torus and $G|_{X_\textup{min}}$ lifts to a subgroup $G_{\widetilde{X_{\textup{min}}}}\subseteq\textup{Aut}(\widetilde{X_{\textup{min}
  }})$.
  
  By Proposition \ref{keyproposition} and \cite[Theorem 1.1]{zhang2016ndimension}, the result holds  when  $q^\circ(X_{\textup{min}})=0$.  
  We prepare to exclude the case   $q^\circ(X_{\textup{min}})=1$ (cf.~\textbf{Step 1-3}) so that the remaining case  $q^\circ(X_{\textup{min}})=3$ is what we desire. Suppose the contrary that $q^\circ(X_{\textup{min}})=1$.
 
\par \vskip 0.3pc \noindent
\textbf{Step 1.} We reduce to the Albanese closure of $X_{\textup{min}}$. Fixing an element $g\in G|_{X_\textup{min}}\subseteq\textup{Bim}(X_{\textup{min}})$ and taking the Albanese closure $\tau:X'_{\textup{min}}\rightarrow X_{\textup{min}}$, we get a meromorphic map $X'_{\textup{min}}\rightarrow X_{\textup{min}}\dashrightarrow X_{\textup{min}}$. 
After the Stein factorization (cf.~\cite[Definition 3.2]{nakayama2009building}), we get:
\[\xymatrix{X_{\textup{min}}'\ar[dr]_\tau&X_1\ar[l]_\sigma\ar[d]^{\tau'}&X'_{\textup{min}}\ar@{-->}[l]_{g'}\ar[d]^\tau\\
&X_{\textup{min}}&X_{\textup{min}}\ar@{-->}[l]^g}\]
where $\tau'$ is a finite morphism and $g'$ is bimeromorphic. 
Since $g$ is  isomorphic in codimension one (cf.~Theorem \ref{lemisomorphic}),  by the property of Stein factorization and the  quasi-\'etaleness of $\tau$,  our $g'$ is also isomorphic in codimension one and  $\tau'$ is quasi-\'etale. 
Moreover, $$q^\circ(X_1)=q^\circ(X_{\textup{min}})=q(X'_{\textup{min}})=q(X_1)=1.$$ 
By Definition \ref{defi-alb-closure} (4) and Proposition \ref{pro1}, $\tau'$ factors through $\tau$, i.e., there exists a surjective morphism $\sigma:X_1\rightarrow X'_{\textup{min}}$ such that $\tau'=\tau\circ\sigma$. 
Since $\deg\tau=\deg\tau'$, our $\sigma$ is bimeromorphic and we see that $g\in G|_{X_\textup{min}}$ lifts to a bimeromorphic transformation on its Albanese closure. Note that $G|_{X_{\textup{min}}}$ lifts to $G_{X_{\textup{min}}'}$ such that its quotient is $G|_{X_{\textup{min}}}$. 

\par \vskip 0.3pc \noindent
\textbf{Step 2.} From now on, we may assume that $X_{\textup{min}}$ has at worst terminal singularities, $q^\circ(X_{\textup{min}})=q(X_{\textup{min}})=1$ and $K_{X_{\textup{min}}}\sim_{\mathbb{Q}}0$. Let $\alpha: X_{\textup{min}}\rightarrow \textup{Alb}(X_{\textup{min}})=:E$ denote the Albanese map. 
By Theorem \ref{thm-graf-thm1.10}, there exists an \'etale cover $E_1\rightarrow E$ such that $X_{\textup{min}}\times_E E_1\cong F\times E_1$ for a fibre $F$ of $\alpha$. 
By the choice of the Albanese closure, $q^\circ(F)=0$ and $\dim F=2$. 
With $F$ further replaced by its global index-one cover, we may assume that $K_F\sim 0$. 
Also, the subgroup $G|_{X_{\textup{min}}}\subseteq\textup{Bim}(X_{\textup{min}})$ descends to $G_E\subseteq \textup{Aut}(E)$ on $E$ (cf.~\cite[Lemma 9.11]{ueno2006classification}). 
Since $E_1\rightarrow E$ is an isogeny, there exists another isogeny  $E_2\rightarrow E_1$ such that the composite map $\theta: E_2\rightarrow E$ is a multiplication (finite) map. 
Then, by \cite[Lemma 4.9]{nakayama2009building},  $G_E$ lifts to $G_{E_2}$ such that $G_{E_2}\rightarrow G_E$ is surjective.
Moreover, after the base change $E_2\rightarrow E_1$, $X_{\textup{min}}\times_E E_2\cong F\times E_2$ with respect to $\alpha$ and $\theta$. Therefore, regarding $F\times E_2$ as a subset of $X_{\textup{min}}\times E_2$, we get a natural lifting $G_{F\times E_2}\subseteq \textup{Bim}(F\times E_2)$ by restricting $g\times g'$ on $X_{\textup{min}}\times E_2$ to $F\times E_2$ for each $g\in G|_{X_{\textup{min}}}$ and $g'\in G_{E_2}$ such that $g_E\circ \alpha=\alpha\circ g$ and $\theta\circ g_E=g'\circ\theta$. Here, $g\in G|_{X_{\textup{min}}}$  descends to $g_E$ on $E$ and lifts to $g'$ (which is not  unique) on $G_{E_2}$. By the proof in Lemma \ref{smallcase1}, there exists a finite index subgroup $G_1$ of $G_{F\times E_2}$ such that $G_1$ is free of rank $2$ (cf.~\cite[Lemma 2.4]{zhang2013algebraic}).

\par \vskip 0.3pc \noindent
\textbf{Step 3.} Taking a minimal resolution $\widetilde{F}\rightarrow F$, we see that $\widetilde{F}$ is a minimal K3 surface. 
Then we get a natural lifting $\widetilde{G_1}\subseteq\textup{Bim}(\widetilde{F}\times E_2)$ of $G_1$ on $F\times E_2$. 
By Lemma \ref{splitting lemma}, $$\textup{Bim}(\widetilde{F}\times E_2)=\textup{Aut}(\widetilde{F}\times E_2)\subseteq \textup{Aut}(\widetilde{F})\times\textup{Aut}(E_2).$$
Hence,  $(\widetilde{F}\times E_2, \widetilde{G_1})$ satisfies \textbf{Hyp\,(A)} (cf.~\cite[Theorem 1.1]{dinh2012on}). 
As a result, there exists a $\widetilde{G_1}$-equivariant non-trivial fibration $\widetilde{F}\times E_2\rightarrow E_2$, contradicting Lemma \ref{lem-zhang-2-10}. 
Therefore, the case $q^\circ(X)=1$ cannot happen. 
So $q^\circ(X_{\textup{min}})=3$ and $X_{\textup{min}}$ is a $Q$-torus.

\par \vskip 0.3pc \noindent
\textbf{Step 4.} So far we have  completed the proof of the first part of Theorem \ref{thm1.1} (2). 
Since $X_{\textup{min}}$ is a $Q$-torus, its Albanese closure $X_{\textup{min}}'$ satisfies the following properties: $K_{X_\textup{min}'}\sim_{\mathbb{Q}} 0$, $q(X_{\textup{min}}')=3$, $X_{\textup{min}}'$ has at worst terminal singularities and the  subgroup $G|_{X_{\textup{min}}}\subseteq\textup{Bim}(X_{\textup{min}})$ lifts to $G_{X_{\textup{min}}'}\subseteq\textup{Bim}(X_{\textup{min}}')$ as in \textbf{Step 1} such that a finite index subgroup of $G_{X_{\textup{min}}'}$ has rank $2$  (cf.~\cite[Lemma 2.4]{zhang2013algebraic}). 
With the same proof as in Lemma \ref{smallcase1}, $X_{\textup{min}}'$ is isomorphic to a complex $3$-torus by showing that the Albanese map $X_{\textup{min}}'\rightarrow \textup{Alb}(X_{\textup{min}}')$ is an isomorphism. Thus, $G_{X_{\textup{min}}'}\subseteq\textup{Bim}(X_{\textup{min}}')=\textup{Aut}(X_{\textup{min}}')$.


\par \vskip 0.3pc \noindent
\textbf{Step 5.} In this step, we shall prove that each $g\in G|_{X_{\textup{min}}}$ is actually an automorphism. 
Let $\pi:T_1:=X_{\textup{min}}'\rightarrow X_{\textup{min}}$ be the Albanese closure of $X_{\textup{min}}$ as in  \textbf{Step 4}. 
Then, $G|_{X_{\textup{min}}}$ lifts to $G_{T_1}\subseteq\textup{Aut}(T_1)$ as in \textbf{Step 1}. 
Consider the following commutative diagram, 
\[
\xymatrix{T_1 \ar[r]^{g|_{T_1}}\ar[d]_\pi &T_1 \ar[d]^\pi\\
X_{\textup{min}}\ar@{-->}[r]^g&X_{\textup{min}}
}
\]
where $g\in G|_{X_{\textup{min}}}$ lifts to $g|_{T_1}$ (which is not unique) on $T_1$. 
Let $H:=\textup{Gal}(T_1/X_{\textup{min}})$. 
Since each $g|_{T_1}$ normalizes  $H$, i.e., $g|_{T_1} Hg|_{T_1}^{-1}\subseteq H$ for any $g|_{T_1}\in G_{T_1}$, 
 the composite map $\pi\circ g|_{T_1}$ is $H$-invariant. 
 By the universality of the quotient morphism $\pi$ over the \'etale loci, the composite $\pi\circ g|_{T_1}$ factors through $\pi$. Thus, $g|_{T_1}$ descends to a well-defined morphism 
$\bar{g}:X_{\textup{min}}\rightarrow X_{\textup{min}}$, acting biregularly on $X_{\textup{min}}$. 
Since the meromorphic map $g$ is determined by an open dense subset $U\subseteq X_{\textup{min}}$ and $g|_U=\bar{g}|_U$, our $g=\bar{g}$ is biholomorphic.

\begin{proof}[\textup{\textbf{End of Proof of Theorem \ref{thm1.1}}}]
Now,  each $g\in G|_{X_\textup{min}}$ is  a (biholomorphic) automorphism and $(X_{\textup{min}}, G|_{X_{\textup{min}}})$ satisfies \textbf{Hyp\,(A)}. 
Thanks to \cite[Lemma 2.4 and \S 2.16]{zhang2013algebraic}, $X_{\textup{min}}\cong T_1/H$ for a finite group $H$ acting freely outside a finite set of a complex $3$-torus $T_1$. We complete the proof of Theorem \ref{thm1.1}.
\end{proof}

We end up  with the following extended result,  
the conditions of which coincide with those in \cite[Theorem 1.1]{zhang2009theorem}.
We say that a group $G$ is \textit{virtually unipotent} (resp.\,\textit{virtually solvable}), if a finite-index subgroup $G_1$ of $G$  is unipotent (resp.\,solvable). 
\begin{remark}[\textbf{A further generalization}]\label{rmk_solvable}
\textup{Let $X$ be a  compact K\"ahler (smooth) threefold, $G\subseteq \textup{Aut}(X)$ a subgroup such that  $G|_{H^{1,1}(X,\mathbb{C})}$ is solvable and has connected Zariski-closure ($Z$-connected for short), and $G/N(G)\cong\mathbb{Z}^{\oplus 2}$ (this is the case when $(X,G)$ satisfies \textbf{Hyp\,(A)}).
Here, $N(G)\subseteq G$ is the set of elements $g\in G$ of null entropy.
Then with the same strategy for the proof of Theorem \ref{thm1.1}, we can show that either $X$ is rationally connected, or $X$ is $G$-equivariantly  bimeromorphic to a $Q$-torus.}

\textup{First, \cite[Lemma 2.10]{zhang2009theorem} is still valid in this situation.
Second, for any equivariant finite \'etale Galois cover $\pi:(\widetilde{X},\widetilde{G})\to (X,G)$ with $G\cong\widetilde{G}/\textup{Gal}(\widetilde{X}/X)$, we have a natural surjection $\widetilde{G}|_{H^{1,1}(\widetilde{X},\mathbb{C})}\twoheadrightarrow \widetilde{G}|_{\pi^*H^{1,1}(X,\mathbb{C})}\cong G|_{H^{1,1}(X,\mathbb{C})}$.
Hence, by considering the virtually unipotent kernel (cf. \cite[Theorem 2.2]{campana2014automorphisms}), if $G|_{H^{1,1}(X,\mathbb{C})}$ is virtually solvable and $Z$-connected, then so is $\widetilde{G}|_{H^{1,1}(\widetilde{X},\mathbb{C})}$ (\cite[Lemma 5.5]{dinh2012tits}).
Moreover, note that $N(\widetilde{G})=\pi^{-1}(N(G))$  (cf.~\cite[Appendix A, Lemma A.8]{nakayama2009building}), hence $\pi$ induces an isomorphism $\widetilde{G}/N(\widetilde{G})\cong G/N(G)\cong\mathbb{Z}^{\oplus 2}$.}

\textup{For terminal threefolds (as the end product of the MMP), the same arguments work  after we replace $H^{1,1}(X,\mathbb{C})$ by the complexified Bott-Chern cohomology $H^{1,1}_{\textup{BC}}(X)\otimes_{\mathbb{R}}\mathbb{C}$.
Indeed, for a $G$-equivariant generically finite dominant meromorphic map $\pi:V\dashrightarrow W$ between normal compact K\"ahler spaces with only  rational singularities, we can show that,  $G|_{H^{1,1}_{\textup{BC}}(V)\otimes\mathbb{C}}$ is $Z$-connected and virtually solvable if and only if so is $G|_{H^{1,1}_{\textup{BC}}(W)\otimes\mathbb{C}}$ (cf.~\cite[Lemma 2.7]{zhong2019intamplified} and \cite[Lemma 4.2 and its proof]{hu2020a}).
}	
\end{remark}

\begin{thm}\label{solvable_case}
Let $X$ be a compact K\"ahler threefold with at worst terminal singularities.	
Let $G\subseteq\textup{Aut}(X)$ be a subgroup such that $G|_{H^{1,1}_{\textup{BC}}(X)}$ is solvable and $Z$-connected, and $G/N(G)\cong\mathbb{Z}^{\oplus 2}$.
Then $X$ is either rationally connected or $G$-equivariantly bimeromorphic to a $Q$-torus.
\end{thm}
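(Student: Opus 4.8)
The plan is to reduce Theorem \ref{solvable_case} to Theorem \ref{thm1.1} by showing that the solvable, $Z$-connected hypothesis on $G|_{H^{1,1}_{\textup{BC}}(X)}$ together with $G/N(G)\cong\mathbb{Z}^{\oplus 2}$ gives us, after passing through the various (quasi-)\'etale covers and bimeromorphic models appearing in the proof of Theorem \ref{thm1.1}, exactly the input needed to rerun that proof. Concretely, I would first invoke \cite[Theorem 1.1]{zhang2009theorem} (the Tits-type theorem) to note that $N(G)$ is virtually unipotent and $G/N(G)$ is free abelian of rank $\le n-1=2$, so the hypothesis $G/N(G)\cong\mathbb{Z}^{\oplus 2}$ forces maximal rank; this is the precise analogue of \textbf{Hyp\,(A)} at the level of cohomology. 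The key point, already recorded in Remark \ref{rmk_solvable}, is that both of these properties are stable under the operations used in Section \ref{section3}: equivariant resolutions, equivariant finite \'etale (Galois) covers, global index-one covers, Albanese closures, and $G$-equivariant generically finite dominant meromorphic maps.

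The key steps, in order, mirror the proof of Theorem \ref{thm1.1}. Step one: by Proposition \ref{prop_kappa<=0} (whose proof only used Lemma \ref{lem-zhang-2-10}, which Remark \ref{rmk_solvable} asserts still holds here), we have $\kappa(X)\le 0$. Step two: if $K_X$ is not pseudo-effective, $X$ is uniruled, and the MRC-fibration argument from the proof of Theorem \ref{thm1.1}(1) descends $G$ to the (non-uniruled) base $Z$; since this is a $G$-equivariant fibration and the descended action still satisfies the solvable/$Z$-connected/rank-$2$ conditions (using the generically-finite-map stability from Remark \ref{rmk_solvable} applied to the normalization $X'$ of the graph), Lemma \ref{lem-zhang-2-10} in its generalized form forces $\dim Z=0$, so $X$ is rationally connected. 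Step three: if $K_X$ is pseudo-effective, then $\kappa(X)=0$, $K_{X_{\textup{min}}}\sim_{\mathbb{Q}}0$ for a minimal model $X_{\textup{min}}$, and $G$ descends to $G|_{X_{\textup{min}}}\subseteq\textup{Bim}(X_{\textup{min}})$ with the hypotheses preserved (a bimeromorphic map between terminal minimal threefolds is isomorphic in codimension one by Theorem \ref{lemisomorphic}, so $\pi^*$ identifies the relevant cohomologies $G$-equivariantly). Then one runs \textbf{Step 1} through \textbf{Step 5} verbatim: the Albanese-closure reduction, the splitting cover $F\times E_2$ in the case $q^\circ=1$, the use of Lemma \ref{splitting lemma} and Lemma \ref{lem-zhang-2-10} to exclude $q^\circ=1$, the identification of $X_{\textup{min}}'$ with a $3$-torus, and finally the descent showing every element of $G|_{X_{\textup{min}}}$ is biholomorphic. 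At each cover one replaces the relevant finite-index-subgroup statements (\cite[Lemma 2.4]{zhang2013algebraic}) by the observation that the solvable, $Z$-connected, rank-$2$ conditions pass to the lift, as in Remark \ref{rmk_solvable}.

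The main obstacle is bookkeeping the stability of ``solvable and $Z$-connected'' under non-finite covers and under bimeromorphic/generically finite maps in the singular Bott-Chern setting, since unlike the clean statement ``$(X,G)$ satisfies \textbf{Hyp\,(A)}'' one must check at every one of the many intermediate spaces ($\widetilde{X}$, $X'$, $X_{\textup{min}}$, $X_1$, $X_{\textup{min}}'$, $F\times E_2$, $\widetilde{F}\times E_2$, $T_1$) that the descended or lifted cohomological action retains all three properties. The delicate direction is the one through an \'etale cover: here $\widetilde{G}|_{H^{1,1}_{\textup{BC}}(\widetilde{X})\otimes\mathbb{C}}$ surjects onto $G|_{H^{1,1}_{\textup{BC}}(X)\otimes\mathbb{C}}$ with virtually unipotent kernel (using the singular analogues of \cite[Theorem 2.2]{campana2014automorphisms} and \cite[Lemma 5.5]{dinh2012tits}, together with \cite[Lemma 2.7]{zhong2019intamplified} for the Bott-Chern functoriality), and one needs $N(\widetilde{G})=\pi^{-1}(N(G))$ (the singular version of \cite[Appendix A, Lemma A.8]{nakayama2009building}) to conclude $\widetilde{G}/N(\widetilde{G})\cong\mathbb{Z}^{\oplus 2}$; once this lemma-chasing is set up, the geometric content is identical to Theorem \ref{thm1.1} and no new ideas are required.
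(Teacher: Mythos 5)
Your proposal is correct and follows essentially the same route as the paper: rerun the proof of Theorem \ref{thm1.1}, with the only new ingredient being the stability of ``$Z$-connected and (virtually) solvable'' plus $G/N(G)\cong\mathbb{Z}^{\oplus 2}$ under equivariant \'etale covers and generically finite dominant meromorphic maps, which the paper isolates in the key lemma (kernel fixes a nef and big class, hence lies in $N(G)$ and is virtually unipotent by \cite[Theorem 2.2]{campana2014automorphisms}, then \cite[Lemma 5.5]{dinh2012tits}) and applies precisely at \textbf{Step 3} to $E\times\widetilde{F}\to X_{\textup{min}}$ before invoking \cite[Lemma 2.10]{zhang2009theorem}. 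Your bookkeeping of the intermediate spaces and of $N(\widetilde{G})=\pi^{-1}(N(G))$ matches Remark \ref{rmk_solvable}, so no substantive difference from the paper's argument.
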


\begin{lem}
Let $\pi:V\dashrightarrow W$ be a $G$-equivariant generically finite dominant meromorphic map between normal compact K\"ahler spaces with only  rational singularities. 
Then  $G|_{H^{1,1}_{\textup{BC}}(V)\otimes\mathbb{C}}$ is $Z$-connected and virtually solvable if and only if so is $G|_{H^{1,1}_{\textup{BC}}(W)\otimes\mathbb{C}}$.	
\end{lem}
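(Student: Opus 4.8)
The plan is to reduce the statement to the smooth case via simultaneous $G$-equivariant resolutions, and then invoke the known pull-back formalism for cohomology of smooth K\"ahler manifolds together with the structure theory of solvable linear groups. First I would take a $G$-equivariant resolution of the graph of $\pi$: since the statement is symmetric, it suffices to find a smooth compact K\"ahler threefold $\widetilde{V}$ together with $G$-equivariant generically finite dominant meromorphic maps $\widetilde{V}\dashrightarrow V$ and $\widetilde{V}\dashrightarrow W$; resolving the indeterminacy of $\pi$ $G$-equivariantly (using \cite[Theorem 2.0.1]{wlodarczk2009resolution} and the argument in \cite[Section 1.4]{nakayama2009building}) and then resolving $\widetilde{V}$ itself achieves this. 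Hence it is enough to treat the case where one of $V,W$ is smooth, and in fact — chaining through $\widetilde{V}$ — the case where \emph{both} are smooth, together with the case of a $G$-equivariant generically finite dominant morphism from a smooth threefold to a (possibly singular, but rational-singularity) one. So the real content is: (i) invariance of ``$Z$-connected and virtually solvable'' of the image of $G$ in $\mathrm{GL}\!\left(H^{1,1}_{\textup{BC}}(\,\cdot\,)\otimes\mathbb{C}\right)$ under a $G$-equivariant generically finite dominant meromorphic map between \emph{smooth} compact K\"ahler threefolds, and (ii) the comparison between the Bott–Chern cohomology of a rational-singularity space and that of a resolution.

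For (ii), I would use Remark \ref{cm_rmk_bimero_invariant}(2) and the references cited there (\cite{zhong2019intamplified}, \cite{boucksom2013an}): for $W$ with rational singularities and $\sigma:\widetilde{W}\to W$ a $G$-equivariant resolution, the pull-back $\sigma^*:H^{1,1}_{\textup{BC}}(W)\to H^{1,1}_{\textup{BC}}(\widetilde{W})$ is an injective $G$-equivariant linear map whose image is a $G$-stable subspace, so $G|_{H^{1,1}_{\textup{BC}}(W)\otimes\mathbb{C}}\cong G|_{\sigma^*H^{1,1}_{\textup{BC}}(W)\otimes\mathbb{C}}$ is a quotient of a $G$-stable restriction of $G|_{H^{1,1}_{\textup{BC}}(\widetilde{W})\otimes\mathbb{C}}$. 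Thus it suffices to prove the Lemma for smooth threefolds, where $H^{1,1}_{\textup{BC}}$ is the ordinary Dolbeault cohomology $H^{1,1}(\,\cdot\,,\mathbb{C})$. For (i), with $\mu:\widetilde{V}\dashrightarrow V$ and $\nu:\widetilde{V}\dashrightarrow W$ both $G$-equivariant and generically finite dominant between smooth threefolds, resolved so that they are morphisms, I would use that $\mu^*$ (resp. $\nu^*$) is an injective $G$-equivariant map onto a $G$-stable subspace of $H^{1,1}(\widetilde{V},\mathbb{C})$ (injectivity of pull-back under a generically finite dominant morphism: apply the projection formula against a K\"ahler class, exactly as in Remark \ref{cm_rmk_bimero_invariant}(1)). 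Hence both $G|_{H^{1,1}(V,\mathbb{C})}$ and $G|_{H^{1,1}(W,\mathbb{C})}$ are realized as quotients, by a $G$-invariant subspace, of restrictions to $G$-stable subspaces of $G|_{H^{1,1}(\widetilde{V},\mathbb{C})}$, and conversely each controls the other through $\widetilde{V}$.

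It remains to check that the class of $Z$-connected, virtually solvable linear groups is stable under passing to a $G$-stable subspace (i.e. restriction), to a $G$-stable quotient, and — crucially for the ``if'' direction — is \emph{reflected} in the following sense: if $G$ acts on a space $U$ with a $G$-stable subspace $U_0$ such that $U/U_0$ carries (up to a $G$-stable further subspace) a faithful-enough copy of the image we care about, then solvability of the two pieces forces solvability upstairs. Restriction and quotient preserve solvability trivially, and preserve $Z$-connectedness by \cite[Lemma 5.5]{dinh2012tits} (the Zariski closure maps onto/into the relevant closures); for the reflection step I would invoke the standard fact, also used in Remark \ref{rmk_solvable}, that the kernel of $G|_{H^{1,1}(\widetilde{V},\mathbb{C})}\twoheadrightarrow G|_{\mu^*H^{1,1}(V,\mathbb{C})}$ is virtually unipotent (\cite[Theorem 2.2]{campana2014automorphisms}), together with the Lie–Kolchin type argument that an extension of a virtually solvable group by a virtually unipotent normal subgroup is virtually solvable, and that $Z$-connectedness is inherited. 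The main obstacle I anticipate is precisely this bookkeeping with the virtually unipotent kernels and with the singular-space Bott–Chern pull-backs: making sure that ``virtually solvable + $Z$-connected'' — rather than genuinely solvable — is the property that is cleanly stable in all directions, and that the finite-index subgroups and Zariski closures are compatible across the whole diagram $V \dashleftarrow \widetilde{V} \dashrightarrow W$. This is routine given the cited results but requires care, and is the reason the statement is phrased with ``virtually solvable'' in the conclusion of the preceding Remark.
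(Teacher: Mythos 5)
Your plan is correct and follows essentially the same route as the paper's proof: reduce to a holomorphic map via a $G$-equivariant resolution of the graph of $\pi$, use injectivity of the (Bott--Chern) pullback to realize $G|_{H^{1,1}_{\textup{BC}}(W)\otimes\mathbb{C}}$ as a quotient of $G|_{H^{1,1}_{\textup{BC}}(V)\otimes\mathbb{C}}$ (the easy direction), and recover the converse from the virtually unipotent kernel together with \cite[Lemma 5.5]{dinh2012tits}. The one detail you should make explicit is why that kernel is virtually unipotent: its elements fix the nef and big class $\pi^*\xi_W$ for a K\"ahler class $\xi_W$ on $W$ (here generic finiteness is used), hence are of null entropy by \cite[Corollary 2.2]{dinh2015compact}, and only then does \cite[Theorem 2.2]{campana2014automorphisms} apply.
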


\begin{proof}
First we consider the case when $\pi$ is holomorphic.
It is known that $\pi^*:\pi^*H^{1,1}_{\textup{BC}}(W)\otimes\mathbb{C}\to \pi^*H^{1,1}_{\textup{BC}}(V)\otimes\mathbb{C}$ is injective (cf.~\cite[Proposition 2.8]{zhong2019intamplified}) and $\pi^*H^{1,1}_{\textup{BC}}(W)\otimes\mathbb{C}$ is $G$-invariant via the pullback action.
The `only if' direction is easy.
Also, being $Z$-connected is automatically true after replacing $G$ by a finite-index subgroup.
So it suffices to show that if $G|_{H^{1,1}_{\textup{BC}}(W)\otimes\mathbb{C}}$ is virtually solvable, then $G|_{H^{1,1}_{\textup{BC}}(V)\otimes\mathbb{C}}$ is virtually solvable. 
Consider the natural restriction group homomorphism $G|_{H^{1,1}_{\textup{BC}}(V)\otimes\mathbb{C}}\twoheadrightarrow G|_{\pi^*H^{1,1}_{\textup{BC}}(W)\otimes\mathbb{C}}\cong G|_{H^{1,1}_{\textup{BC}}(W)\otimes\mathbb{C}}$. 
Let $K|_{H^{1,1}_{\textup{BC}}(V)\otimes\mathbb{C}}$ denote its kernel, where $K\subseteq G$ is a subgroup of $G$.
Choose any K\"ahler class $\xi_Y$ on $Y$.
Then $K$ fixed the nef and big class $\xi_Y$, since $\pi$ is generically finite (cf.~\cite[Proposition 2.6]{zhong2019intamplified}).
Therefore, $K\subseteq N(G)$ is of null entropy (cf.~\cite[Corollary 2.2]{dinh2015compact}).
So by \cite[Theorem 2.2]{campana2014automorphisms}, $K|_{H^{1,1}_{\textup{BC}}(V)\otimes\mathbb{C}}$is virtually unipotent and hence virtually solvable.
Then it follows from \cite[Lemma 5.5]{dinh2012tits} that $G|_{H^{1,1}_{\textup{BC}}(V)\otimes\mathbb{C}}$ is virtually solvable.

Now we reduce the meromorphic map to the holomorphic case.
Taking a $G$-equivariant resolution of the normalization of the graph of $\pi$, we get the natural projections $p_1:X\to V$ and $p_2:X\to W$.
Then $p_1$ is bimeromorphic and $p_2$ is generically finite.
Our lemma is proved. 
\end{proof}

\begin{proof}[Proof of Theorem \ref{solvable_case}]
Almost the same as the proof of Theorem \ref{thm1.1}.
We only need to consider \textbf{Step 3} in the proof to show $G|_{E\times\widetilde{F}}$ is $Z$-connected and virtually solvable (it automorphically has rank two).
So we got the generically finite dominant meromorphic map
$\tau:E\times\widetilde{F}\to X_{\textup{min}}$ followed by $X_{\textup{min}}\to X$.
Applying our key lemma, we have $G|_{E\times\widetilde{F}}$ is $Z$-connected and virtually solvable.
Then, applying \cite[Lemma 2.10]{zhang2009theorem}, we can exclude this case and hence $X$ is a $Q$-torus.
Other parts are the same as Proof of Theorem \ref{thm1.1}.
\end{proof}

\end{document}